\providecommand{\abs}[1]{\left|#1\right|}
\providecommand{\norm}[1]{\left \| #1\right \|}
\newtheorem{proposition}{Proposition}[section]
\newtheorem{theorem}[proposition]{Theorem}
\newtheorem{corollary}[proposition]{Corollary}
\newtheorem{lemma}[proposition]{Lemma}
\newtheorem{remark}[proposition]{Remark}
\numberwithin{equation}{section}
\newcommand{\R}{\mathbb{R}}
\newcommand{\intr}{\int_{\R^{N}}}
\newcommand{\eps}{\varepsilon}
\title[Positive solutions to nonlinear Choquard equations]
{Positive bound states to nonlinear Choquard  equations 
in the presence of nonsymmetric potentials}
\date{}
\author{Liliane Maia}
\address[L. Maia]{Departamento de Matem\'atica, 
Universidade de Bras\'\i lia, 70910-900 Bras\'\i lia, Brazil}
\email[L. Maia]{lilimaia@unb.br}
\author{Benedetta Pellacci}
\address[B. Pellacci]{Dipartimento di Matematica e Fisica,
Universit\`a della Campania  ``Luigi Vanvitelli'',  via A. Lincoln 5, 81100
Caserta, Italy.}
\email[B. Pellacci]{benedetta.pellacci@unicampania.it}
\author{Delia Schiera}
\address[D. Schiera]{Dipartimento di Matematica e Fisica,
Universit\`a della Campania  ``Luigi Vanvitelli'',  via A. Lincoln 5, 81100
Caserta, Italy.}
\email[D. Schiera]{delia.schiera@unicampania.it}
\subjclass[2010]{ 45K05, 35Q55, 35J91, 35J20. }
\keywords{
 Choquard equations, Nonlocal nonlinearities, Positive solutions.}
\thanks{Research partially supported by:   PRIN-2017-JPCAPN Grant: ``Equazioni 
differenziali alle derivate parziali non lineari'',
by project Vain-Hopes within the program VALERE: VAnviteLli pEr la RicErca and by the INdAM-GNAMPA group. 
L.\ Maia was partially supported by FAPDF, CAPES, and CNPq grant 309866/2020-0.}
\begin{document}
\maketitle

\begin{abstract}
The  existence of a positive solution to a class of Choquard equations with potential going at a positive limit at infinity possibly from above or oscillating is proved.
Our results  include the physical case and do not require any symmetry assumptions on the potential.
\end{abstract}
\noindent

\section{Introduction}
In this paper we will prove the existence of a positive solution of the following problem
\begin{equation}\label{Choquardeq}\tag{$P_V$}
\begin{cases}
-\Delta u + V(x) u =  (I_{\alpha} \ast u  ^{2})u  & \text{in } \mathbb{R}^N, 
 \\
\qquad u \in H^{1}(\mathbb{R}^N), &
 \end{cases}
\end{equation}
where 
\begin{equation}\label{range2}
\alpha \in ((N-4)^{+},N-1]
 \end{equation}
 and $I_{\alpha}$ represents the Riesz operator of order $\alpha$, defined for each point $x \in \mathbb{R}^N \setminus \{ 0 \}$ by
\[ I_{\alpha}(x)=\frac{A_{\alpha}}{\abs{x}^{N-\alpha}}, \quad \text{where } \quad A_{\alpha}=\frac{\Gamma(\frac{N-\alpha}{2})}{\Gamma(\alpha/2) 2^{\alpha} \pi^{N/2}},\]
and  the potential $V$ is such that  
\begin{equation}\label{V1} 
V \in C^0, \inf_{x \in\mathbb{R}^N} V(x) >0, \text{ and  } \lim_{\abs{x} \to \infty} V(x) =V_{\infty}\in (0,+\infty).
\end{equation}

This equation  appears in the context of various  physical models and
we refer to \cite[Section 2]{MorozVanSchaftJFPTA} for an extensive introduction
on the physical context. 

When $V(x)\equiv V_{\infty}$ \eqref {Choquardeq} reduces to the
autonomous problem
\begin{equation}\label{Choqlimit}\tag{$P_{\infty}$}
-\Delta u + V_{\infty} u =  (I_{\alpha} \ast \left| u \right| ^{2}) u \quad \text{in } \mathbb{R}^N,
\end{equation}
and in the physical case, $N=3,\, \alpha=2$, the first  result goes back to Lieb 
(\cite{Lieb}) who proves the existence of  a normalized solution, corresponding to  
the unique minimum point of the energy functional on the $L^{2}(\R^{3})$ sphere.
The existence of infinitely many radial symmetric solutions has been obtained 
by Lions in \cite{Lions} again for $N=3$ and $\alpha=2$.
These existence results have been extended by Moroz and Van Shaftinghen  
(\cite{MorozVanSchaftJFA})
to different exponents $\alpha$, different dimensions $N$ and even to  powers different from the square on the function $u$, 
starting to the fact that, by the Hardy--Littlewood--Sobolev inequality, the right hand side of \eqref{Choqlimit} (and of \eqref{Choquardeq}) is well defined on $H^1(\mathbb{R}^N)$ when
\begin{equation}\label{rangep}
\frac{N-2}{N+\alpha} < \frac{1}{p} < \frac{N}{N+\alpha}.
 \end{equation}
More precisely, in \cite{MorozVanSchaftJFA} it is proved that \eqref{Choqlimit} 
has a positive radially symmetric least action solution $\omega 
\in C^2(\mathbb{R}^N)$.  The question of the validity of the uniqueness of positive 
solution has been already addressed by Lieb in \cite{Lieb} who  proves it
for normalized solutions; this result has been extended to any positive solution
by Ma and Zhao (\cite{MaZhao}) who prove   that
there exists  a unique positive solution of Problem \eqref{Choqlimit} for
$\alpha=2$ and $N=3$; this results has been extended for $N=4,5$ in
\cite{wangyi}. Let us also mention that the uniqueness property of the least action 
solution  has been  proved  in \cite{Xiang} for  powers $p\neq 2$ belonging in a 
suitable range.

%%%%%%%%%%%%%%%%%%%%%%%%
Precise decay estimates for $\omega$ are proved in \cite{MorozVanSchaftJFA,MorozVanSchaftJDE};
in particular the decay turns out to be exponential in our  case for $\alpha<N-1$ and a polynomial perturbation of an exponential decay for $\alpha=N-1$  (see for more details
Theorem \ref{thm:decay} in Section \ref{setting}).

Coming back to the non-autonomuous Problem \eqref{Choquardeq}, when $V(x)\leq V_{\infty}$ the existence of a least action  solution is due to 
\cite{Lions2} (see also \cite{MorozVanSchaftJFPTA, VanShaftXia}) and can be obtained by minimizing the associated action functional
\[ 
\mathcal{I}_V(u)=\frac{1}{2} \int_{\mathbb{R}^N}(\abs{\nabla u}^2 + V(x)u^2) - \frac{1}{4} \int_{\mathbb{R}^N} (I_{\alpha} \ast u^2) u^2 . 
\]
on the Nehari manifold
\[
\mathcal{N}_V =\left\{ u \in H^1(\R^{N}) \setminus \{ 0 \} : 
\langle \mathcal{I}'_V(u), u \rangle=0\right\}.
\]
But, when $V(x)$ approaches $V_{\infty}$ from above or oscillating, one is 
forced to look for higher action level solutions, and as a consequence, a deeper 
comprehension of  the  possible lack of compactness of a bounded Palais-Smale 
sequence is needed.

To  this aim, a well-known tool is the so-called Splitting Lemma 
(see \cite{BenciCerami, Struwe}), whose application requests the uniqueness of positive 
solutions of \eqref{Choqlimit}, which, as above observed, is known for
$p=\alpha=2$ and $N=3,4,5$.

The use of the Splitting Lemma allows to detect an action level's interval where
compactness is recovered, so that the existence of a critical point can be
obtained by  constructing a minimax level in this interval.
In this construction, a precise knowledge of the decay of $\omega$ is needed, 
and it is  crucial a meticulous comparison between the asymptotical decay of 
the solution of the limit problem and the  decay of the potential acting
in the problem.

Following this path we will prove the following result
\begin{theorem}\label{thm:uniq}
Assume $\alpha=2$,  $N=3, 4, 5$ and that \eqref{V1} holds. We also assume that the potential $V(x)$ satisfies  
 \begin{equation}\label{Vdecay}
V(x) \le V_{\infty} + A_0 \abs{x}^{\sigma}e^{-\beta\abs{x}} , \quad \text{with } A_0 >0,
\forall  x \in \R^N,
\end{equation}
and the exponent $\sigma$ is such that
\begin{equation}\label{iposigma}
\begin{cases}
\sigma\in \R & \text{if }  \beta > 2 \sqrt{ V_{\infty}}
\\
\sigma <- 2  & \text{if }
\beta = 2 \sqrt{ V_{\infty}} \text{ and $N=5$}, 
\\
\sigma <- \frac32  & \text{if }
\beta = 2 \sqrt{ V_{\infty}} \text{ and $N=4$}, 
\\
\sigma <-1   & \text{if }
\beta = 2 \sqrt{ V_{\infty}} \text{ and $N=3$}.
\end{cases}
\end{equation}
 Then there exists a positive solution to \eqref{Choquardeq}. 
\end{theorem}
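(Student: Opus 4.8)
The plan is to argue variationally and, by means of the Splitting Lemma, to reduce the proof to the construction of a critical value of $\mathcal I_V$ strictly inside the interval $(m_\infty,2m_\infty)$, where $m_\infty$ is the least action level of the limit problem \eqref{Choqlimit}. I would work on the Nehari manifold $\mathcal N_V=\{u\in H^1(\R^N)\setminus\{0\}:\langle\mathcal I_V'(u),u\rangle=0\}$, recalling that each $u\neq0$ has a unique projection $t_V(u)u\in\mathcal N_V$ with $\mathcal I_V(t_V(u)u)=\max_{t>0}\mathcal I_V(tu)$ and that $c_V:=\inf_{\mathcal N_V}\mathcal I_V>0$ equals the mountain--pass level of $\mathcal I_V$. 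Let $\omega>0$ be the ground state of \eqref{Choqlimit}, unique up to translations by the cited results for $\alpha=2$, $N=3,4,5$, and $m_\infty=\mathcal I_\infty(\omega)$. Projecting the translates $\omega(\cdot-z)$ and using $\intr(V-V_\infty)\omega(\cdot-z)^2\to0$ as $\abs z\to\infty$ yields $c_V\le m_\infty$; if $c_V<m_\infty$, the strict inequality with the limit level excludes vanishing and dichotomy for a minimizing sequence on $\mathcal N_V$, so $c_V$ is attained. Thus either the infimum is attained, and then a positive ground state solves \eqref{Choquardeq}, or $c_V=m_\infty$ is \emph{not} attained (the delicate case, which occurs e.g.\ when $V\ge V_\infty$ with $V\not\equiv V_\infty$); I focus on the latter.

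Next I would invoke the Splitting Lemma, whose applicability rests on the uniqueness of $\omega$: any bounded $(PS)_c$ sequence for $\mathcal I_V$ either converges or splits, along a subsequence, as $u_0+\sum_{j=1}^k\omega(\cdot-z_n^j)+o(1)$ with $\abs{z_n^j}\to\infty$, $\abs{z_n^j-z_n^\ell}\to\infty$, $u_0$ a (possibly trivial) solution of \eqref{Choquardeq}, and $c=\mathcal I_V(u_0)+k\,m_\infty$. Since neither \eqref{Choqlimit} nor (in the case under consideration) \eqref{Choquardeq} has a nontrivial solution below $m_\infty$, this forces the $(PS)_c$ condition for every $c\in(0,2m_\infty)\setminus\{m_\infty\}$; in particular it holds throughout $(m_\infty,2m_\infty)$.

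The core step is a linking/barycentre argument on $\mathcal N_V$. Let $\Upsilon:\mathcal N_V\to\R^N$ be a barycentre map detecting the concentration point of almost--least--energy functions. The set $Z=\{u\in\mathcal N_V:\Upsilon(u)=0\}$ satisfies $\inf_Z\mathcal I_V>m_\infty$: a minimizing sequence on $Z$ has pinned barycentre, hence cannot vanish, and having energy $\to m_\infty<2m_\infty$ it cannot split, so it would converge and realise $c_V=m_\infty$, contradicting non--attainment. On the other hand, for $R$ large the map $z\mapsto t_V(\omega(\cdot-z))\omega(\cdot-z)$ sends the sphere $\{\abs z=R\}$ into $\{\mathcal I_V<\inf_Z\mathcal I_V\}$ (because $\mathcal I_V(t_V(\omega(\cdot-z))\omega(\cdot-z))\to m_\infty$), and by a degree argument this sphere links $Z$. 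To keep the resulting minimax value below $2m_\infty$, I would fill the sphere by a disc of competitors in which single bumps near the centre are replaced by two--bump configurations $\omega(\cdot-z_1)+\omega(\cdot-z_2)$, whose Nehari energy equals
\[
2m_\infty-\tfrac12\intr\bigl(I_\alpha\ast\omega(\cdot-z_1)^2\bigr)\omega(\cdot-z_2)^2+\tfrac12\intr(V-V_\infty)\bigl(\omega(\cdot-z_1)^2+\omega(\cdot-z_2)^2\bigr)+o(1),
\]
the nonlocal interaction being positive and of polynomial order $\asymp\abs{z_1-z_2}^{-(N-2)}$. One so produces a minimax value $c$ with $m_\infty<\inf_Z\mathcal I_V\le c<2m_\infty$, hence a $(PS)_c$ sequence and a nontrivial critical point $u$. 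Since $\mathcal I_V$ is invariant under $u\mapsto\abs u$ (the nonlinearity $(I_\alpha\ast u^2)u$ being sign--preserving and even in $\abs u$), $u$ may be chosen $\ge0$, and by elliptic regularity ($u\in C^2(\R^N)$) and the strong maximum principle $u>0$, the desired positive solution.

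The hard part — and where \eqref{iposigma} enters — is to ensure that the competitors, and all the deformations used in the linking, stay strictly below $2m_\infty$ and strictly above $m_\infty$ uniformly, which reduces to controlling the integrals $\intr(V-V_\infty)\omega(\cdot-z)^2$ against the (polynomially small) interaction and against $m_\infty$. Here I would use the sharp decay of Theorem \ref{thm:decay} — $\omega(x)\asymp\abs x^{-(N-1)/2}e^{-\sqrt{V_\infty}\abs x}$ for $\alpha<N-1$, and a polynomial perturbation of it for $\alpha=N-1$, so that $\omega(x)^2\asymp\abs x^{-(N-1)}e^{-2\sqrt{V_\infty}\abs x}$ — and compare it with the bound $\abs x^\sigma e^{-\beta\abs x}$ on $(V-V_\infty)^+$ from \eqref{Vdecay}. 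If $\beta>2\sqrt{V_\infty}$, the integral $\intr\abs x^\sigma e^{-\beta\abs x}\omega(x-z)^2\,dx$ is governed by the region $x\approx0$ and is exponentially small at the scale $e^{-2\sqrt{V_\infty}\abs z}$ for every $\sigma\in\R$; if $\beta=2\sqrt{V_\infty}$, the weight $e^{-\beta\abs x}$ resonates with the tail $e^{-2\sqrt{V_\infty}\abs{x-z}}$ of $\omega(\cdot-z)^2$, the two exponential rates coincide, and one must balance the surviving polynomial weights — the segment from $0$ to $z$ carrying a factor of order $\abs z^{\sigma+(N-1)/2}$ (up to a logarithm when $N=3$) — which is what forces $\sigma<-(N-1)/2$, i.e.\ exactly \eqref{iposigma}. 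Making all of this rigorous, together with the construction and deformation of the linking family on the constraint $\mathcal N_V$, is the bulk of the argument.
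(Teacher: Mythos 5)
Your global architecture is the same as the paper's: Nehari manifold, splitting lemma plus uniqueness of the positive ground state of \eqref{Choqlimit} to get the Palais--Smale condition in $(c_\infty,2c_\infty)$ (Corollary \ref{PS}), a barycentre lemma pinning low sublevels away from a prescribed centre (Lemma \ref{cv=cinf}), and a disc of Nehari-projected convex combinations of two translated ground states, with single bumps on its boundary, contradicting a deformation if no critical value existed (Proposition \ref{estimates2} and the proof of Theorem \ref{thm:abstract}). The genuine gap is exactly in the one place where \eqref{Vdecay}--\eqref{iposigma} must enter: the uniform estimate keeping the whole two-bump family strictly below $2c_\infty$. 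You declare it ``the bulk of the argument'' and the sketch you give is internally inconsistent. If, as in your expansion, the gain below $2c_\infty$ is the density--density interaction $\int_{\R^N}(I_\alpha\ast\omega(\cdot-z_1)^2)\,\omega(\cdot-z_2)^2\asymp|z_1-z_2|^{-(N-2)}$ (which is indeed positive and only polynomially small), then in the resonant case $\beta=2\sqrt{V_\infty}$ the potential contribution $\int_{\R^N}(V-V_\infty)\omega(\cdot-z)^2$ is exponentially small, of order $e^{-2\sqrt{V_\infty}|z|}$ times a power of $|z|$, hence automatically negligible against that gain for \emph{every} $\sigma$; your own scheme would never produce the restriction $\sigma<-(N-1)/2$, contrary to what you assert (and if that route could be carried out rigorously it would prove a statement with weaker hypotheses than \eqref{iposigma}, which is a clear sign it is not the argument you are reconstructing). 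The paper instead deliberately discards the density--density term (Proposition \ref{prop:somma}) and measures both the gain and the potential term against the exponentially small overlap quantity $\varepsilon_R$ of \eqref{defeps}; there the condition \eqref{iposigma} arises from proving $\int_{\R^N}(V-V_\infty)\omega_{i,R}^2=o(\varepsilon_R)$ (Lemma \ref{estimatesV}) via the two-sided asymptotics of Lemma \ref{ACR} and of $\varepsilon_R$ (Lemma \ref{le:estepsnew}), including the shift $\nu\sqrt{V_\infty}$ in the polynomial factor when $\alpha=N-1$, i.e.\ $N=3$. You must commit to one comparison and actually carry it out; as written, neither is done.

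Moreover, the resonant asymptotics you state are not correct: with $\omega^2\sim|x|^{-(N-1)}e^{-2\sqrt{V_\infty}|x|}$ (resp.\ the $\nu\sqrt{V_\infty}$-perturbed power for $N=3$), Lemma \ref{ACR}(ii) gives, at distance $R$ from the origin, $e^{-2\sqrt{V_\infty}R}$ times a power of $R$ determined by the case analysis on $\sigma$, $1-N$ and $-(N+1)/2$ (for instance $\sigma+(1-N)+\frac{N+1}{2}=\sigma-\frac{N-3}{2}$ when both exponents exceed $-\frac{N+1}{2}$), not $R^{\sigma+(N-1)/2}$; it is precisely this bookkeeping, compared with $\varepsilon_R\sim e^{-2\sqrt{V_\infty}R}R^{-\frac{N-1}{2}}$ (or $R^{-\frac{N-1}{2}+\nu\sqrt{V_\infty}}$ for $N=3$), that yields \eqref{iposigma}, and your heuristic reproduces neither the exponents nor the $N=3$ correction. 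Secondary but real imprecisions: the error in your two-bump expansion must be $o$ of the gain, not merely $o(1)$, and the estimate must be uniform in the interpolation parameter (the paper treats $\lambda$ near and away from $1/2$ separately); your splitting lemma, with all profiles equal to the positive ground state, is stronger than what is available --- a priori the profiles are arbitrary nontrivial, possibly sign-changing, solutions of \eqref{Choqlimit}, and in the range $(c_\infty,2c_\infty)$ they are excluded by the energy-doubling argument for sign-changing solutions combined with uniqueness, as in Corollary \ref{PS}; finally, a minimax critical point cannot simply be replaced by $|u|$ --- positivity comes from the $u^{\pm}$ Nehari argument at a level below $2c_\infty$, as at the end of the paper's proof of Theorem \ref{thm:abstract}.
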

Theorem \ref{thm:uniq} will be a direct consequence of an abstract result stated in Section
\ref{setting} (Theorem \ref{thm:abstract}). 
Let us point out that in Theorem \ref{thm:uniq} it is admitted the possibility 
that $V(x)$ approaches $V_{\infty }$ from above or oscillating.
Moreover, we believe that the decay assumptions  on $V(x)$ 
are  optimal in this type of argument and are naturally strictly related to the decay estimates of
$\omega$ which varies when $N=3$ and $N=4, 5$ (see for more details
Theorem \ref{thm:decay}).  

Other strategies to find nontrivial solutions, 
avoiding the use of the uniqueness properties  of the limit Problem \eqref{Choqlimit}, have been implemented in the last years (see \cite{clasal, CingolaniClappSecchi, GhimentiMorozVanSchaft, GhimentiVanSchaft, Alves} and the references therein)
In particular, Clapp and Salazar (\cite{clasal}) take advantage of the use of symmetries to increase the minimum action level and to show the existence of a positive (or even sign-changing) solution for potentials enjoying the same symmetries. This has been possible requiring  an enough high level of symmetries
in order to construct a minimax level into the range of  compactness.
Moreover, the decay of the potential is assumed to be of exponential 
type with a negative exponent that naturally depends on the symmetries and it is 
sufficiently large in modulus. The use of symmetries has been also adopted by
Cingolani, Clapp and Secchi in  \cite{CingolaniClappSecchi} to obtain existence results 
for a class of magnetic nonlinear Choquard equations.

Here, taking advantage of the uniqueness property of the limit Problem 
\eqref{Choqlimit}, we will not exploit any symmetries' action, so that our 
existence result does not require any invariance property of the potential. 
In addition, our decay assumptions on $V(x)$ include the possibility
that the exponential part in the decay is  equal to the exponential 
decay of $\omega^{2}$.

Let us conclude the introduction mentioning the existence result of positive 
solutions contained in  \cite{WangQuXiao}, where a Choquard equation with 
competing potentials is studied in the case $p=\alpha=2$, $N=3$ under some 
stronger  decay assumptions on the potentials than the one assumed here.

 \medskip
This paper is organized as follows: in Section \ref{setting} we give the variational setting of the problem and some preliminary results, whereas in Section \ref{estimates} we  get the fundamental asymptotic estimates we need in the proof of the main results. 
The proof of Theorems \ref{thm:abstract} and \ref{thm:uniq} is performed in
Section \ref{final}.
%%%%%%%%%%%%%%%%%%%%%%
\section{Setting of the problem and preliminaries}\label{setting}

We will work in the functional space $H^1(\R^{N})$
 endowed, thanks to \eqref{V1}, with the scalar product and norm, equivalent to
 the usual one
\[ 
( u, v )_V = \int_{\mathbb{R}^N} (\nabla u\cdot \nabla v + V(x) uv), \quad \quad \norm{u}^2_V=\int_{\mathbb{R}^N}(\abs{\nabla u}^2 + V(x) u^2 ). 
\]
Every  solution to \eqref{Choquardeq} is a critical point of the action
functional $\mathcal{I}_V: H^{1}(\R^{N})\mapsto \R$ defined by
\[ 
\mathcal{I}_V(u)=\frac{1}{2} \int_{\mathbb{R}^N}(\abs{\nabla u}^2 + V(x)u^2) - \frac{1}{4} \int_{\mathbb{R}^N} (I_{\alpha} \ast u^2) u^2 . 
\]
where $\alpha $ satisfies \eqref{range2}.

Hypothesis \eqref{range2} and Hardy-Littlewood-Sobolev inequality
imply that $\mathcal{I}_V$ is a $C^{1}$ functional on $H^1(\R^{N})$, (see \cite[Proposition 3.1]{MorozVanSchaftJFPTA}), so that we can define
\begin{equation}\label{defnehari}
\mathcal{N}_V =\left\{ u \in H^1(\R^{N}) \setminus \{ 0 \} : 
\langle \mathcal{I}'_V(u), u \rangle=0\right\},\qquad 
 c_V =\inf_{u \in \mathcal{N}_V} \mathcal{I}_V(u).
 \end{equation}
In an analogous way, we can define $\mathcal{I}_{\infty}: H^{1}(\R^{N})\mapsto \R$ by
\[
\mathcal{I}_{\infty}(v)=\frac{1}{2} \int_{\mathbb{R}^N}(\abs{\nabla u}^2 + V_{\infty}u^2) - \frac{1}{4} \int_{\mathbb{R}^N} (I_{\alpha} \ast  u^2) u^2 ,
\]
where  $H^{1}(\R^{N})$ is endowed with the norm and the scalar product in
\begin{equation}\label{normainfty}
( u, v )  = \int_{\mathbb{R}^N} (\nabla u \nabla v + V_{\infty} uv), \quad \quad \norm{u}^2 =\int_{\mathbb{R}^N}(\abs{\nabla u}^2 +V_{\infty}  u^2 ). 
\end{equation}
Accordingly $\mathcal{N}_\infty (u)$ and  $c_{\infty}$ are defined as follows
\begin{equation}\label{eq:nehariinf}
\mathcal{N}_\infty =\left\{ u \in H^1(\R^{N}) \setminus \{ 0 \} : 
\langle \mathcal{I}'_\infty(u), u \rangle=0\right\},\qquad 
 c_\infty =\inf_{u \in \mathcal{N}_\infty} \mathcal{I}_\infty(u).
\end{equation}

As already mentioned in the Introduction,
the existence of a least action solution to \eqref{Choqlimit} is proved, under 
assumption \eqref{range2}, in  Theorem 3.2 in \cite{MorozVanSchaftJFPTA}. 
Moreover, weak solutions are classical, and, up to translation and inversion of the 
sign, positive and radially symmetric, see \cite{MorozVanSchaftJFA, Lieb}. 
In addition, precise decay asymptotic  for solutions to \eqref{Choqlimit} are 
given in Propositions 6.3, 6.5 and Remark 6.1 in \cite{MorozVanSchaftJFA}, 
(see  also \cite{MorozVanSchaftJDE}),  and they are summarized in the following result.
\begin{theorem}[Theorem 4 pg.157, Remark 6.1 pg.177 in \cite{MorozVanSchaftJFA}]\label{thm:decay}

Let $\omega$ a least action solution to \eqref{Choqlimit}.
Then the following asymptotic estimates hold.
\\
If $(N-4)^{+}<\alpha<N-1$ it results
\begin{equation}\label{decay2} 
\omega(x)= (c+o(1))\abs{x}^{-\frac{N-1}{2}}
e^{-\sqrt{ V_{\infty}}\abs{x}} \quad \text{with $c>0$  and as } \abs{x} \to \infty. 
\end{equation} 
If  $\alpha=N-1$, $\omega$ decays at infinity as follows
\begin{equation}\label{decaypertpol} 
 \omega(x) = (c+o(1)) \abs{x}^{-\frac{N-1- \nu\sqrt{V_{\infty}}}{2} } e^{-\sqrt{V_{\infty}} \abs{x}}, \quad \text{with $c>0$  and as } \abs{x} \to \infty. 
\end{equation}
where $\nu$ is a positive constant depending on the $L^{2}(\R^{N})$ norm of $\omega$
(see \eqref{Q} below).
\end{theorem}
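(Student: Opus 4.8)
The plan is to follow the comparison-plus-asymptotic-ODE strategy underlying \cite{MorozVanSchaftJFA,MorozVanSchaftJDE}. As recalled above, after a translation $\omega$ is positive, radially symmetric, of class $C^{2}$, and vanishes at infinity. The idea is to read \eqref{Choqlimit} as the \emph{linear} equation $-\Delta\omega+(V_{\infty}-h)\omega=0$, where
\[ h:=I_{\alpha}\ast\omega^{2}\ge0 \]
is a radial, continuous potential tending to $0$ at infinity (because $\alpha<N$), and then to proceed in three steps: a rough exponential bound, the sharp asymptotics of $h$, and an asymptotic-integration analysis of the radial ODE for $\omega$.

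First I would establish a crude bound. Given $\eps\in(0,V_{\infty}/2)$, choose $R_{\eps}$ with $h\le\eps$ on $\{\abs{x}\ge R_{\eps}\}$; then $M e^{-\sqrt{V_{\infty}-2\eps}\,\abs{x}}$ is a positive supersolution of $-\Delta+(V_{\infty}-h)$ there, and since $V_{\infty}-h>0$ on that region and both functions vanish at infinity, comparing on $\{\abs{x}=R_{\eps}\}$ for $M$ large gives $\omega(x)\le C_{\eps}e^{-\sqrt{V_{\infty}-2\eps}\,\abs{x}}$. As $\eps$ is arbitrary, $\omega^{2}(y)\abs{y}^{k}\in L^{1}(\R^{N})$ for every $k\ge0$. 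Feeding this back into $h(x)=A_{\alpha}\intr\omega^{2}(y)\abs{x-y}^{-(N-\alpha)}\,dy$ and expanding the kernel about $\abs{x}$, one obtains
\[ h(x)=A_{\alpha}\,Q\,\abs{x}^{-(N-\alpha)}+O\big(\abs{x}^{-(N-\alpha)-1}\big),\qquad Q:=\norm{\omega}_{L^{2}(\R^{N})}^{2}. \]
Here the dichotomy of the statement originates: $N-\alpha>1$ when $\alpha<N-1$, so $h$ is an \emph{integrable}, short-range perturbation, while $N-\alpha=1$ when $\alpha=N-1$, so $h$ is of Coulomb type and \emph{not} integrable at infinity.

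For the sharp decay I would pass to the radial variable: with $v(r):=r^{(N-1)/2}\omega(r)$, a direct computation turns \eqref{Choqlimit} into
\[ v''(r)=\big(V_{\infty}+p(r)\big)v(r),\qquad p(r):=\frac{(N-1)(N-3)}{4r^{2}}-h(r). \]
By the previous step $p$ is smooth with $p,p',p''\to0$ and $\int^{\infty}(\abs{p'}+\abs{p''})<\infty$. Since $v=r^{(N-1)/2}\omega>0$ is bounded, it is (up to a positive multiple) the subdominant solution at $+\infty$, whose asymptotics are furnished by the classical asymptotic-integration theorems — Levinson's theorem when $\alpha<N-1$ and the Liouville--Green approximation when $\alpha=N-1$ — in the form $v(r)=(c+o(1))\exp\big(-\int_{r_{0}}^{r}\sqrt{V_{\infty}+p(s)}\,ds\big)$ with $c>0$. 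It then remains to evaluate the exponent. If $\alpha<N-1$ then $p\in L^{1}(r_{0},\infty)$, so $\int_{r_{0}}^{r}\sqrt{V_{\infty}+p}=\sqrt{V_{\infty}}\,r+O(1)$ and $v(r)=(c+o(1))e^{-\sqrt{V_{\infty}}\,r}$, that is \eqref{decay2}. If $\alpha=N-1$ then $p(s)=-A_{N-1}Q\,s^{-1}+O(s^{-2})$, hence $\sqrt{V_{\infty}+p(s)}=\sqrt{V_{\infty}}-\tfrac{A_{N-1}Q}{2\sqrt{V_{\infty}}}\,s^{-1}+O(s^{-2})$ and $\int_{r_{0}}^{r}\sqrt{V_{\infty}+p}=\sqrt{V_{\infty}}\,r-\tfrac{A_{N-1}Q}{2\sqrt{V_{\infty}}}\ln r+O(1)$, so $v(r)=(c+o(1))\,r^{A_{N-1}Q/(2\sqrt{V_{\infty}})}e^{-\sqrt{V_{\infty}}\,r}$; recalling $\omega=r^{-(N-1)/2}v$ this is exactly \eqref{decaypertpol} with $\nu=A_{N-1}Q/V_{\infty}$ (cf. \eqref{Q}).

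The genuinely delicate point is not the exponential rate but the exact polynomial prefactor — and the bootstrap it rests on, since one needs some decay of $\omega$ to identify the asymptotics of $h$, and these to sharpen the decay of $\omega$. Plain barrier arguments, comparing $\omega$ from above and below with functions $A\,r^{-(N-1)/2}e^{-\sqrt{V_{\infty}}\,r}$ mildly perturbed by a power $r^{\pm\delta}$, do yield the sharp exponential rate, but pin the prefactor only up to $r^{\pm\delta}$; the exponent $-(N-1)/2$ (resp. $-(N-1)/2+\nu\sqrt{V_{\infty}}/2$ when $\alpha=N-1$) cannot be read off this way — already for $N=3$ the ``free'' comparison function $r^{-(N-1)/2}e^{-\sqrt{V_{\infty}}\,r}$ is a strict subsolution, not a supersolution, of $-\Delta+(V_{\infty}-h)$. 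One is therefore forced into the asymptotic-integration machinery above, with its quantitative error control, or into a careful analysis of the representation $\omega=G_{V_{\infty}}\ast(h\omega)$, where $G_{V_{\infty}}$ is the Green function of $-\Delta+V_{\infty}$ (which itself decays like $\abs{x}^{-(N-1)/2}e^{-\sqrt{V_{\infty}}\,\abs{x}}$). In either route most of the work lies in controlling the error terms uniformly and, in the case $\alpha=N-1$, in verifying that the non-integrable tail of $h$ is \emph{precisely} Coulombic, so that only a single logarithmic correction in the exponent — equivalently, exactly one power of $\abs{x}$ — appears.
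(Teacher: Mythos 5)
Your outline is correct and lands on the right constants, but it is not what the paper does: Theorem \ref{thm:decay} is quoted from \cite{MorozVanSchaftJFA} (Propositions 6.3, 6.5 and Remark 6.1; see also \cite{MorozVanSchaftJDE}), and the paper's only ``proof'' is the observation that \eqref{decay2} and \eqref{decaypertpol} follow from the general asymptotics \eqref{decay3} by expanding the phase in \eqref{Q}: the correction $\sqrt{1-\nu^{N-\alpha}/s^{N-\alpha}}-1$ is integrable at infinity exactly when $\alpha<N-1$, while for $\alpha=N-1$ it contributes $-\tfrac{\nu}{2}\log t$, i.e.\ the extra power $\abs{x}^{\nu\sqrt{V_{\infty}}/2}$. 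Your plan --- crude exponential bound via a supersolution, sharp asymptotics of $h=I_{\alpha}\ast\omega^{2}$ (this is Lemma \ref{le:convo}), then Liouville--Green/Levinson asymptotic integration of $v''=(V_{\infty}+p)v$ with $v=r^{(N-1)/2}\omega$ --- is a genuine alternative to the comparison-function argument of Moroz--Van Schaftingen, and it is consistent with the quoted result: your $\nu=A_{N-1}\norm{\omega}_{2}^{2}/V_{\infty}$ agrees with \eqref{Q}, and $\int_{r_{0}}^{r}\sqrt{V_{\infty}+p}=\sqrt{V_{\infty}}\,Q(r)+O(1)$ because the centrifugal $r^{-2}$ term and the $O(r^{-(N-\alpha)-1})$ error are integrable, so the two phases differ only in the constant $c$. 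What the comparison route buys is that it needs only the zeroth-order asymptotics of $h$ together with the maximum principle; your WKB route additionally needs the hypotheses of the asymptotic-integration theorems, namely decay and integrability of $p'$ and $p''$, hence bounds of the type $\abs{h^{(k)}(r)}=O(r^{-(N-\alpha)-k})$, which you assert but do not prove --- they are standard (differentiate under the integral, or move derivatives onto $\omega^{2}$ when the differentiated kernel fails to be locally integrable, using the exponential decay of $\omega$ and $\nabla\omega$), but that is exactly where the remaining work lies. Two minor points: the identification of $v$ as a positive multiple of the recessive solution is correctly anchored on the crude bound ($v\to0$ kills the dominant component, positivity gives $c>0$), and your use of $Q$ for $\norm{\omega}_{2}^{2}$ clashes notationally with the paper's phase function $Q$ in \eqref{Q}.
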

The above result shows that the interaction of  the Riesz potential and the nonlinearity affects in a substantial way the decay of the least action solutions.
In our context we can see different perturbations on the decay of $\omega$ depending on $\alpha$. 
In general, it holds
\begin{equation}\label{decay3} 
\omega(x)= (c+o(1)) \frac{e^{- \sqrt{V_{\infty}}Q(\abs{x})}}{\abs{x}^{\frac{N-1}{2}}} 
\quad \text{ as } \abs{x} \to \infty, 
\end{equation}
where 
\begin{equation}\label{Q} Q(t)= \int_{\nu}^{t} \sqrt{1 - \frac{\nu^{N-\alpha}}{s^{N-\alpha}}}  \, ds, 
\qquad  \nu^{N-\alpha} = \frac{1}{V_{\infty}} \frac{\Gamma(\frac{N-\alpha}{2})}{\Gamma(\frac{\alpha}{2}) \pi^{N/2} 2^{\alpha} } \int_{\mathbb{R}^N} \abs{\omega}^2. 
\end{equation}
Notice that $\nu$ does not actually depend on the choice of $\omega$, as $\norm{\omega}_2^2$ is invariant among least action solutions (see \cite{MorozVanSchaftJFA}).
Nevertheless, when $\alpha < N-1$, a careful analysis of the function $Q$ shows that \eqref{decay2}  holds, whereas if $\alpha=N-1$ (which includes the physical case $N=3$, $\alpha=2$), a perturbation in the polynomial part occurs and one gets \eqref{decaypertpol}; if $N-1< \alpha < N$, then more involved perturbations appear, as a result of the Taylor expansion for the square root.   
However, in our Theorems \ref{thm:uniq}, \ref{thm:abstract} we will take into consideration the asymptotical decay
given in \eqref{decay2} for the cases $\alpha<N-1$ and  
in \eqref{decaypertpol} for $\alpha=N-1$, which includes
the physical case $\alpha=2$ and $N=3$.

We will prove Theorem \ref{thm:uniq} as a consequence of the  following result.
\begin{theorem}\label{thm:abstract}
Let  $N\geq 2$,  $\alpha\in ((N-4)^{+},N-1]$, and suppose that \eqref{V1} holds. We also assume that the potential $V(x)$ satisfies  
 \begin{equation}\label{Vdecaygen}
V(x) \le V_{\infty} + A_0 \abs{x}^{\sigma}e^{-\beta\abs{x}} , \quad \text{with } A_0 >0,
\forall  x \in \R^N,
\end{equation}
and the exponent $\sigma$ is such that
\begin{equation}\label{ipo:sigma}
\begin{cases}
\sigma\in \R & \text{if }  \beta > 2 \sqrt{ V_{\infty}}
\\
\sigma <\min\left\{-1,-\frac{N-1}2\right\} & \text{if $\beta=2\sqrt{V_{\infty}}$ and 
$\alpha<N-1$,}
\\
\sigma <\min\left\{-1,-\frac{N-1}2+\nu\sqrt{V_{\infty}}\right\} & \text{if $\beta=2\sqrt{V_{\infty}}$ and 
$\alpha=N-1$,}
\end{cases}
\end{equation}
where $\nu$ is introduced in \eqref{decaypertpol}.
Then, if the limit Problem $(P_{\infty})$ has a unique positive solution, 
there exists a positive solution of $(P_{V})$.
\end{theorem}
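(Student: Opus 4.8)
The plan is to combine a concentration--compactness (Splitting Lemma) analysis with a topological min--max construction, in the spirit of \cite{BenciCerami, Struwe} and of Bahri--Li type arguments for scalar field equations with a potential lying above its limit, adapted to the nonlocal functional $\mathcal I_V$. Throughout I work in the cone $\mathcal N_V^+:=\{u\in\mathcal N_V:u\ge 0\}$, which is legitimate because $\mathcal I_V(|u|)=\mathcal I_V(u)$. \emph{Step 1 (reduction to $c_V=c_\infty$).} Projecting a far translate $\omega(\cdot-y)$ onto $\mathcal N_V$ and letting $|y|\to\infty$ gives, via \eqref{V1}, $c_V\le c_\infty$. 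If $c_V<c_\infty$, a minimizing sequence for $\mathcal I_V$ on $\mathcal N_V$ is, by Ekeland's principle, a bounded $(PS)_{c_V}$ sequence; since $c_V<c_\infty$ the Splitting Lemma of Step 2 rules out any escaping bubble, so the sequence converges and $c_V$ is attained. Replacing the minimizer by its modulus and using elliptic regularity together with the strong maximum principle yields a positive solution. Hence from now on we assume $c_V=c_\infty$; this is the delicate case and it always occurs when $V\ge V_\infty$, e.g. for the model $V=V_\infty+A_0|x|^{\sigma}e^{-\beta|x|}$, where moreover $c_V$ is never attained.

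\emph{Step 2 (Splitting Lemma and compactness window).} As in \cite{BenciCerami, Struwe}, with the modifications needed for the Riesz convolution term, every bounded $(PS)_c$ sequence $(u_n)$ for $\mathcal I_V$ satisfies, up to a subsequence,
\[
u_n=u_0+\sum_{j=1}^{k}w^j(\cdot-y_n^j)+o(1)\ \text{ in }H^1(\mathbb R^N),\qquad c=\mathcal I_V(u_0)+\sum_{j=1}^{k}\mathcal I_\infty(w^j),
\]
with $|y_n^j|\to\infty$, $u_0$ a (possibly trivial) solution of $(P_V)$ and each $w^j$ a nontrivial solution of $(P_\infty)$; moreover $u_n\ge 0$ forces $u_0\ge 0$ and $w^j\ge 0$. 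I claim that, along nonnegative sequences, $\mathcal I_V$ satisfies $(PS)_c$ for every $c\in(c_\infty,2c_\infty)$. Indeed, if such a sequence is not compact then $k\ge 1$; since each $\mathcal I_\infty(w^j)\ge c_\infty$ and $c<2c_\infty$ we must have $k=1$ and $\mathcal I_V(u_0)<c_\infty=c_V$. But $w^1\ge 0$ is a nontrivial solution of $(P_\infty)$, hence positive, hence --- by the assumed uniqueness of positive solutions of $(P_\infty)$ --- a translate of $\omega$, so $\mathcal I_\infty(w^1)=c_\infty$ and $\mathcal I_V(u_0)=c-c_\infty\in(0,c_V)$; this is impossible, as $u_0$ is either $0$ (forcing $c=c_\infty$) or a nontrivial solution with $\mathcal I_V(u_0)\ge c_V$. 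Thus $u_n\to u_0$.

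\emph{Step 3 (the min--max level).} Let $B_1$ be the unit ball of $\mathbb R^N$. For $\rho>0$ and $\theta\in\partial B_1$ let $t_{\rho\theta}>0$ be the Nehari coefficient of $\omega(\cdot-\rho\theta)$, set $\Psi_\rho(\theta):=t_{\rho\theta}\,\omega(\cdot-\rho\theta)\in\mathcal N_V^+$, and fix a continuous barycentre map $\beta\colon\mathcal N_V^+\to\mathbb R^N$ (so $\beta(\Psi_\rho(\theta))=\rho\theta+o(\rho)$ for $\rho$ large). Since $\mathcal N_V^+$ is contractible, $\Psi_\rho$ extends, and we define
\[
c^*:=\inf_{h\in\Gamma_\rho}\ \max_{\xi\in\overline{B_1}}\ \mathcal I_V(h(\xi)),\qquad \Gamma_\rho:=\bigl\{h\in C(\overline{B_1},\mathcal N_V^+):h|_{\partial B_1}=\Psi_\rho\bigr\}.
\]
Two bounds are needed. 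First, $c^*>c_\infty$: a degree argument applied to $\beta\circ h$ (whose boundary trace has winding number $1$ about $0$) produces, for every $h\in\Gamma_\rho$, a point $\xi_0$ with $\beta(h(\xi_0))=0$, so $c^*\ge\inf\{\mathcal I_V(u):u\in\mathcal N_V^+,\ \beta(u)=0\}$; and this infimum is strictly larger than $c_\infty=c_V$, for otherwise a minimizing sequence would, by Ekeland and Step 2, either converge --- giving an attained minimum of $\mathcal I_V$ on $\mathcal N_V$, hence a positive solution by the maximum principle, and we are done --- or split into a single escaping bubble $\omega(\cdot-y_n)$, forcing $\beta\to\infty$, a contradiction. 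Second, $c^*<2c_\infty$: as competitors in $\Gamma_\rho$ one takes the $\mathcal N_V$-projections of two-bump configurations $\omega(\cdot-a)+\omega(\cdot-b)$ that collapse to the single bump $\Psi_\rho$ on $\partial B_1$; because the Riesz interaction between two positive bumps is attractive and lowers the energy by a polynomially small amount, while the potential contribution $\int_{\mathbb R^N}(V-V_\infty)(\omega(\cdot-a)+\omega(\cdot-b))^2$ is estimated against $A_0|x|^{\sigma}e^{-\beta|x|}$ and the decay of $\omega^2$ given by Theorem \ref{thm:decay}, the asymptotic estimates of Section \ref{estimates} show that the maximum of $\mathcal I_V$ over such competitors stays below $2c_\infty$ precisely under \eqref{ipo:sigma} (any $\sigma$ if $\beta>2\sqrt{V_\infty}$; the stated thresholds if $\beta=2\sqrt{V_\infty}$, in which case $V-V_\infty$ decays at the same exponential rate as $\omega^2$ and the polynomial orders --- which differ between $\alpha<N-1$ and $\alpha=N-1$, cf.\ \eqref{decay2}--\eqref{decaypertpol} --- must be matched).

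\emph{Conclusion and main obstacle.} By Step 3, $c^*\in(c_\infty,2c_\infty)$, and by Step 2 the $(PS)_{c^*}$ condition holds along nonnegative sequences; a standard deformation/min--max argument (the negative pseudo-gradient flow being chosen to preserve $\mathcal N_V^+$) then produces a nonnegative critical point $u^*$ of $\mathcal I_V$ at level $c^*>0$, which is therefore nontrivial, and by elliptic regularity and the strong maximum principle $u^*>0$ is the desired positive solution of $(P_V)$. The crux of the argument is the upper bound $c^*<2c_\infty$ in Step 3: this is the ``meticulous comparison'' between the decay of $\omega$ and that of $V-V_\infty$ alluded to in the Introduction, and it is exactly here that hypotheses \eqref{ipo:sigma} --- believed to be optimal --- are used.
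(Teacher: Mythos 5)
Your overall architecture is the same as the paper's (Splitting Lemma plus uniqueness to get compactness in $(c_{\infty},2c_{\infty})$, a barycenter-based lower bound, a two-bump family for the upper bound), but there is a genuine gap at the step you yourself call the crux: the strict bound $c^*<2c_{\infty}$. Your competitor family is unspecified exactly where everything is decided, namely \emph{where the two bumps sit relative to the origin}. Since your boundary data $\Psi_\rho$ are single bumps on the sphere of radius $\rho$ centred at the origin, the natural extension ``collapsing to a single bump'' anchors the second bump at (or near) the origin, e.g. $h(s\theta)=$ Nehari projection of $s\,\omega(\cdot-\rho\theta)+(1-s)\omega$. For such a family the potential contribution of the central bump, $\int_{\R^N}(V-V_\infty)\omega^2$, is a fixed quantity that does \emph{not} vanish as $\rho\to\infty$, while the two-bump interaction \eqref{defeps} is exponentially small in $\rho$; a computation as in \eqref{final:est2} then gives, at equal weights, a value of order $2c_\infty+\tfrac12\int_{\R^N}(V-V_\infty)\omega^2+o(1)$, which exceeds $2c_\infty$ precisely in the interesting case $V\ge V_\infty$, $V\not\equiv V_\infty$, for any $\sigma$. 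The estimate only works if \emph{every} bump stays at distance from the origin at least half of the mutual separation; when $\beta=2\sqrt{V_\infty}$ the exponentials then cancel and the comparison is decided by the polynomial factors, which is exactly where \eqref{ipo:sigma} and the dichotomy \eqref{decay2}/\eqref{decaypertpol} enter (Lemmas \ref{le:estepsnew}, \ref{estimatesV}, Propositions \ref{prop:somma}, \ref{estimates2}). The paper enforces this geometry by fixing one bump at $Rz_1$, $|z_1|=1$, and moving the other over $R\,\partial B_2(z_1)$ — a sphere centred at the \emph{fixed bump}, not at the origin — so the separation is constantly $2R$, both bumps are at distance at least $R$ from the origin, and a single asymptotic regime suffices. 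If instead you keep your class $\Gamma_\rho$ (boundary bumps on the sphere of radius $\rho$ about the origin), a workable competitor must place the fixed bump also at distance $\rho$, the separation then ranges over $(0,2\rho]$, and you must additionally control small and intermediate separations; none of this is covered by your one-sentence appeal to ``the asymptotic estimates of Section \ref{estimates}''.

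Two further points, less central. First, your restriction to the cone $\mathcal N_V^+$ requires a deformation preserving simultaneously the Nehari constraint and nonnegativity; you assert such a pseudo-gradient flow exists but do not construct it, and it is avoidable: the paper works without sign restrictions and recovers signedness of the escaping bubble (Corollary \ref{PS}) and of the final critical point by energy-doubling arguments on $\mathcal N_\infty$ and $\mathcal N_V$, which is also where the uniqueness of the positive solution of \eqref{Choqlimit} is used — so your cone machinery buys nothing and adds an unproved step. Second, in your lower bound the Ekeland argument should be run on all of $\mathcal N_V$ (legitimate because the infimum over $\{u\in\mathcal N_V:\beta(u)=0\}$ is assumed equal to the global infimum $c_V$), with the barycenter of the Ekeland points controlled through their closeness to the original almost-minimizers, as in Lemma \ref{cv=cinf}; applying Ekeland directly on the barycenter-constrained set does not yield a Palais--Smale sequence for $\mathcal I_V$. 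Both issues are repairable; the upper bound for your family, as described, is not.
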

\begin{remark}
Let us observe that, for $\alpha<N-1$ 
  the hypothesis \eqref{ipo:sigma}   requires
$\sigma<-1$ when $N=2$ and $\sigma <-(N-1)/2$ when $N\geq 3$.
\end{remark}
{\begin{remark} 
Let  us  note that as observed in \cite{GhimentiMorozVanSchaft}  
the hypothesis  $\alpha\in ((N-4)^{+},N)$ is fundamental in order to have
the convolution term well defined in $H^{1}(\R^{N})$ as a consequence of the Hardy-Littlewood-Sobolev inequality.
Notice that when $\alpha=2$ this amounts to consider
$N\leq 5$, so that this upper bound on the dimension is needed
from the beginning, in order to have the convolution term well-defined.
\end{remark}
{\begin{remark}
Theorem \ref{thm:abstract} does not include the case $\beta=2\sqrt{V_{\infty}}$
and $\alpha \in (N-1,N)$. In this range  the decay of $\omega$ changes 
(\cite{MorozVanSchaftJFA}, \cite{MorozVanSchaftJDE}). 
An analogous result can be obtained,  also in the case $p=2$, $\alpha > N-1 $. 
But, the principal tool in order to obtain the decay estimates (see Lemma 
\ref{ACR}) cannot be directly applied; the interested reader can see
\cite{MaPeSc} where we prove an extension of Lemma \ref{ACR} to handle the 
case $p=2,\, \alpha\in (N-1,N-\frac12).$
\end{remark}

Let us conclude this section by recalling the following  decay information concerning  the convolution term.
\begin{lemma}\label{le:convo}
 Let $h \ge 0$, $h\in L^{\infty}$ such that 
\begin{equation}\label{eq:bound}
 \sup_{\mathbb{R}^N} h(x) (1+\abs{x})^{s} < + \infty 
 \end{equation}
for some $s >N$. Then
\[ I_{\alpha} \ast h(x) = I_{\alpha}(x) \norm{h}_1 (1+ o(1)), \qquad \text{as $\abs{x}\to \infty$.}
\]
\end{lemma}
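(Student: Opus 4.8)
The plan is to estimate $I_{\alpha}\ast h(x) = \int_{\R^N} I_\alpha(x-y)\,h(y)\,dy$ by splitting $\R^N$, at the threshold $|x|/2$, into the three pairwise disjoint sets $A_1 = \{|y| < |x|/2\}$, $A_2 = \{|x-y| < |x|/2\}$ (disjoint from $A_1$ because $|x| \le |y| + |x-y|$), and $A_3 = \R^N \setminus (A_1 \cup A_2)$. Note first that \eqref{eq:bound} with $s > N$ forces $h \in L^1(\R^N)$, so the right-hand side is finite and the statement is meaningful. On $A_1$ I would write $I_\alpha(x-y) = I_\alpha(x)\,\bigl(|x|/|x-y|\bigr)^{N-\alpha}$ and use that $|x|/2 \le |x-y| \le 3|x|/2$ there, so that $\bigl(|x|/|x-y|\bigr)^{N-\alpha} \le 2^{N-\alpha}$, while for each fixed $y$ this factor tends to $1$ as $|x|\to\infty$; dominated convergence with dominating function $2^{N-\alpha}h \in L^1$ then gives
\[
\int_{A_1} I_\alpha(x-y)\,h(y)\,dy = I_\alpha(x)\bigl(\norm{h}_1 + o(1)\bigr)
= I_\alpha(x)\,\norm{h}_1\,(1+o(1)), \qquad |x|\to\infty.
\]
This is the leading term, and it remains to show that the contributions over $A_2$ and $A_3$ are $o(I_\alpha(x)) = o(|x|^{\alpha-N})$.

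On $A_2$ one has $|y| \ge |x| - |x-y| > |x|/2$, hence $h(y) \le C(1+|x|/2)^{-s} \le C'|x|^{-s}$ for $|x|$ large, while the change of variables $z = x-y$ gives $\int_{A_2} I_\alpha(x-y)\,dy = A_\alpha\int_{|z|<|x|/2}|z|^{\alpha-N}\,dz = c_N |x|^{\alpha}$, which is finite because $\alpha>0$; thus the $A_2$ integral is $O(|x|^{\alpha-s})$. On $A_3$ one has simultaneously $|y|\ge|x|/2$, so again $h(y)\le C'|x|^{-s}$, and $|x-y|\ge|x|/2$, so $I_\alpha(x-y)\le A_\alpha(|x|/2)^{\alpha-N}$; since $\int_{|y|\ge|x|/2} h(y)\,dy \le C\int_{|y|\ge|x|/2}(1+|y|)^{-s}\,dy \lesssim |x|^{N-s}$ (again using $s>N$ for convergence), the $A_3$ integral is also $O(|x|^{\alpha-s})$.

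Finally, since $s>N$ we have $\alpha-s < \alpha-N$, so $|x|^{\alpha-s} = o(|x|^{\alpha-N}) = o(I_\alpha(x))$, and combining the three estimates yields $I_{\alpha}\ast h(x) = I_\alpha(x)\,\norm{h}_1\,(1+o(1))$. The argument is elementary; the only points that require some care are the uniform bound $\bigl(|x|/|x-y|\bigr)^{N-\alpha}\le 2^{N-\alpha}$ on $A_1$ that justifies passing to the limit under the integral sign, and checking that the decay exponent produced by the two tail regions $A_2,A_3$ is exactly $s>N$ — which is precisely the hypothesis of the lemma.
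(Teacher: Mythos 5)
Your proof is correct, and it takes a different route from the paper: the paper disposes of this lemma in one line by quoting Lemma 6.2 of Moroz--Van Schaftingen (\cite{MorozVanSchaftJFA}), whereas you reprove the asymptotics from scratch. Your splitting at the scale $\abs{x}/2$ into $\{\abs{y}<\abs{x}/2\}$, $\{\abs{x-y}<\abs{x}/2\}$ and the remainder is the standard way such Riesz-potential asymptotics are established, and each step checks out: on the first region the bound $\bigl(\abs{x}/\abs{x-y}\bigr)^{N-\alpha}\le 2^{N-\alpha}$ (valid since $\alpha\le N-1<N$) legitimizes dominated convergence with dominating function $2^{N-\alpha}h\in L^1$, which indeed holds because \eqref{eq:bound} with $s>N$ gives $h\in L^1$; on the second region $\alpha>0$ makes $\int_{\abs{z}<\abs{x}/2}\abs{z}^{\alpha-N}\,dz$ of order $\abs{x}^{\alpha}$ while $h(y)\le C\abs{x}^{-s}$ there; on the third region the tail bound $\int_{\abs{y}\ge\abs{x}/2}h\le C\abs{x}^{N-s}$ combined with $I_{\alpha}(x-y)\le A_\alpha(\abs{x}/2)^{\alpha-N}$ does the job, and $\alpha-s<\alpha-N$ closes the argument. (Only a cosmetic remark: to pass from $I_\alpha(x)(\norm{h}_1+o(1))$ to $I_\alpha(x)\norm{h}_1(1+o(1))$ you implicitly use $\norm{h}_1>0$; the case $h\equiv 0$ is trivial and can be dismissed in a word.) What your approach buys is self-containedness and transparency about exactly which hypotheses are used where ($\alpha\in(0,N)$ and $s>N$); what the paper's citation buys is brevity and, in the quoted reference, a statement proved in the generality needed elsewhere in \cite{MorozVanSchaftJFA}.
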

\begin{proof}
The  conclusion follows immediately from Lemma 6.2 in \cite{MorozVanSchaftJFA}.
\end{proof}
As an immediate consequence of Lemma \ref{le:convo}, we get the following asymptotical decay of the convolution term  
\begin{equation}\label{decayconv}
I_{\alpha} \ast \omega^2(x) = I_{\alpha}(x) \norm{\omega}_2 (1+ o(1)), \qquad \text{as $\abs{x}\to \infty$.}
\end{equation}
Indeed, taking $h=\omega^{2}$ one immediately has that \eqref{eq:bound} is satisfied for every $s$.

%%%%%%%%%%%%%%\section{Asymptotic estimates}\label{estimates}
\section{Asymptotic estimates}\label{estimates}
In this section we will obtain all the   asymptotic estimates 
 needed in proving our main results.
We will first introduce the threshold that will guide our study and we will establish
its decay. Then,  the asymptotical decay of the integral term involving the potential will be studied, and at last we will deal with the nonlinearity term. 

Let us precise that with the expression $f\sim g$ as $|x|\to \infty$ we mean that 
the quotient $f/g\to l\in(0,+\infty)$ as $|x|\to \infty$.
The  following Lemma will be repeatedly exploited.
\begin{lemma}[Lemma 3.7 in \cite{AmbrosettiColoradoRuiz}]\label{ACR}
Let $u, v : \mathbb{R}^N \to \mathbb{R}$ be two positive continuous radial functions such that
\[ 
u(x) \sim \abs{x}^a e^{-b\abs{x}}, \quad v(x) \sim \abs{x}^{a'} e^{-b'\abs{x}}  
\qquad \text{as $|x| \to \infty$,}
\]
 where $a, a' \in \mathbb{R}$, and $b, b' >0$. Let $\xi \in \mathbb{R}^N $ tend to infinity. We denote $u_{\xi}(x)=u(x-\xi)$. Then the following asymptotic estimates hold
\begin{itemize}
\item[(i)] If $b < b'$,
\[ \int_{\mathbb{R}^N} u_{\xi} v\sim e^{-b\abs{\xi}} \abs{\xi}^{a}. \]
A similar expression holds if $b > b'$, by replacing $a$ and $b$ with $a'$ and $b'$. 
\item[(ii)] If $b=b'$, suppose that $a \ge a'$. Then
\[ \int_{\mathbb{R}^N} u_{\xi} v\sim
\begin{cases}
e^{-b\abs{\xi}} \abs{\xi}^{a+a'+\frac{N+1}{2}} & \text{ if } a' > -\frac{N+1}{2},\\
e^{-b\abs{\xi}} \abs{\xi}^{a} \log \xi & \text{ if } a' = -\frac{N+1}{2},\\
e^{-b\abs{\xi}} \abs{\xi}^{a}& \text{ if } a' < -\frac{N+1}{2}.
\end{cases} \]
\end{itemize}
\end{lemma}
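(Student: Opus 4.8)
The plan is to treat $\int_{\mathbb{R}^N}u_\xi v$ by the Laplace (concentration) method applied to the exponential weight carried by the two tails. Write $e:=\xi/\abs{\xi}$. In the region where both $\abs{x}$ and $\abs{x-\xi}$ are large the integrand is, by hypothesis, asymptotic to $\abs{x-\xi}^a\abs{x}^{a'}e^{-\phi(x)}$ with phase
\[\phi(x)=b\abs{x-\xi}+b'\abs{x},\]
so the behaviour of the integral is dictated by the minimum of $\phi$. The reduction to the profiles is routine: outside a fixed large ball each of $u,v$ is trapped between $1\mp\delta$ times its profile, while in the bounded regions near $x=0$ and near $x=\xi$ (where $v$, resp.\ $u$, is only continuous and positive) I would keep the functions exact and bound the complementary far contributions by crude exponential estimates, letting $\delta\to0$ at the end to get the sharp order. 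Minimizing $\phi$ along the segment $x=te$, $t\in[0,\abs{\xi}]$, gives $\phi=b\abs{\xi}+(b'-b)t$, which already reveals the three regimes.

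\textbf{Case $b\neq b'$.} Here $\phi$ attains its minimum at a single endpoint: at $x=0$ if $b<b'$ (value $b\abs{\xi}$) and at $x=\xi$ if $b>b'$. Suppose $b<b'$. Expanding $\abs{x-\xi}=\abs{\xi}-\langle x,e\rangle+O(\abs{x}^2/\abs{\xi})$ on bounded sets gives $u(x-\xi)=\abs{\xi}^a e^{-b\abs{\xi}}e^{b\langle x,e\rangle}(1+o(1))$, whence
\[\int_{\mathbb{R}^N}u_\xi v=\abs{\xi}^a e^{-b\abs{\xi}}\Big(\int_{\mathbb{R}^N}v(x)\,e^{b\langle x,e\rangle}\,dx\Big)(1+o(1)).\]
The bracketed integral converges, since $b<b'$ forces $v(x)e^{b\langle x,e\rangle}\lesssim\abs{x}^{a'}e^{-(b'-b)\abs{x}}\in L^1$, and by radiality of $v$ it is a positive constant independent of the direction $e$; the part of $\phi$ away from $x=0$ is exponentially smaller and negligible. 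Hence $\int u_\xi v\sim\abs{\xi}^a e^{-b\abs{\xi}}$, and $b>b'$ follows by exchanging the roles of $(u,a,b)$ and $(v,a',b')$.

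\textbf{Case $b=b'$.} Now $\phi(x)=b(\abs{x}+\abs{x-\xi})\ge b\abs{\xi}$, with equality on the whole segment $[0,\xi]$, so the minimum is degenerate along a one-dimensional set; this is the delicate case. I would introduce tube coordinates $x=se+y$, with $s\in\mathbb{R}$, $y\perp e$, $y\in\mathbb{R}^{N-1}$, and use for $0<s<\abs{\xi}$ the expansions $\abs{x}=s+\tfrac{\abs{y}^2}{2s}+\cdots$ and $\abs{x-\xi}=(\abs{\xi}-s)+\tfrac{\abs{y}^2}{2(\abs{\xi}-s)}+\cdots$, so that $\phi\approx b\abs{\xi}+\tfrac{b}{2}\abs{y}^2\big(\tfrac1s+\tfrac1{\abs{\xi}-s}\big)$. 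Carrying out the transverse Gaussian integral produces the factor $\big(s(\abs{\xi}-s)/\abs{\xi}\big)^{(N-1)/2}$, after which
\[\int_{\mathbb{R}^N}u_\xi v\sim e^{-b\abs{\xi}}\abs{\xi}^{-\frac{N-1}2}\int_0^{\abs{\xi}}s^{a'+\frac{N-1}2}(\abs{\xi}-s)^{a+\frac{N-1}2}\,ds.\]
The substitution $s=\abs{\xi}\tau$ turns the last integral into $\abs{\xi}^{a+a'+N}\int_0^1\tau^{a'+\frac{N-1}2}(1-\tau)^{a+\frac{N-1}2}\,d\tau$, a Beta-type integral whose convergence at the endpoint $\tau=0$ (i.e.\ near $x=0$) governs the outcome.

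\textbf{The trichotomy and the main obstacle.} Since $a\ge a'$, the endpoint $\tau=0$ carries the exponent $a'+\tfrac{N-1}2$. If $a'>-\tfrac{N+1}2$ the integral converges to a positive constant and the previous display gives $e^{-b\abs{\xi}}\abs{\xi}^{a+a'+\frac{N+1}2}$. If $a'=-\tfrac{N+1}2$ the integrand behaves like $\tau^{-1}$ near $0$; cutting off the tube expansion at the scale $s\sim1$, where the transverse approximation ceases to be valid, yields the logarithmic factor and $e^{-b\abs{\xi}}\abs{\xi}^a\log\abs{\xi}$. If $a'<-\tfrac{N+1}2$ the mass concentrates at $s\lesssim1$, i.e.\ in a fixed neighbourhood of $x=0$, where the transverse Gaussian degenerates and $\abs{x}^{a'}$ is no longer slowly varying; there I would replace the tube computation by the same local analysis as in the case $b<b'$, namely $u(x-\xi)\approx\abs{\xi}^a e^{-b\abs{\xi}}e^{b\langle x,e\rangle}$ on bounded sets, the remaining local integral of $v$ contributing a finite positive constant and giving $e^{-b\abs{\xi}}\abs{\xi}^a$. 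The principal difficulty is exactly this endpoint matching when $b=b'$: reconciling the bulk tube computation with the near-origin local estimate so that the two contributions (which are of the same order $\abs{\xi}^a e^{-b\abs{\xi}}$ in the last regime) are neither lost nor double-counted. The three subcases are precisely the borderline situations of this matching.
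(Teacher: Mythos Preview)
The paper does not prove this lemma; it is quoted verbatim from \cite{AmbrosettiColoradoRuiz} and used as a black box throughout Section~\ref{estimates}. So there is no ``paper's own proof'' to compare with, and your proposal is a genuine attempt to supply what the authors simply cite.

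Your Laplace--method sketch is the right one and captures the mechanism correctly: unique interior minimum of $\phi$ at an endpoint when $b\ne b'$, degenerate minimum along the segment $[0,\xi]$ when $b=b'$, transverse Gaussian producing the factor $\big(s(\abs{\xi}-s)/\abs{\xi}\big)^{(N-1)/2}$, and the Beta--type integral whose endpoint behaviour at $\tau=0$ gives the trichotomy in $a'$ versus $-\tfrac{N+1}{2}$. One point worth making explicit in the last subcase $a'<-\tfrac{N+1}{2}$: the ``local'' integral $\int v(x)e^{b\langle x,e\rangle}\,dx$ that you invoke near $x=0$ is \emph{not} obviously finite when $b=b'$, since the exponential weight does not decay along the direction $e$. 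It does converge, but this requires the angular Laplace estimate $\int_{S^{N-1}}e^{-br(1-\cos\theta)}\,d\sigma\sim c\,r^{-(N-1)/2}$, which yields the radial integrand $r^{a'+(N-1)/2}$ and hence convergence precisely when $a'<-\tfrac{N+1}{2}$. With that observation the endpoint matching you flag as the principal difficulty closes cleanly: the tube contribution for $s\ge M$ is $O(M^{a'+(N+1)/2})\abs{\xi}^a e^{-b\abs{\xi}}$, the local contribution over $\{|x|\le M\}$ tends to $C_\infty\abs{\xi}^a e^{-b\abs{\xi}}$ with $C_\infty=\int_{\mathbb{R}^N}v\,e^{b\langle x,e\rangle}\in(0,\infty)$, and letting $M\to\infty$ pins down the constant. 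You should also note that when $a\le-\tfrac{N+1}{2}$ the endpoint $\tau=1$ must be treated symmetrically; since $a\ge a'$, its contribution is of lower or equal order and does not alter the stated asymptotics.
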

Let  $z_{1}\in \R^{N}$ with $\abs{z_{1}}=1$ and $z_{2}\in \partial B_{2}(z_{1})$,
we denote  with  $ \omega_{i, R}(x)$ a positive  solution of \eqref{Choqlimit} achieving $c_{\infty}$ (see \eqref{eq:nehariinf})
of the limit problem translated in $Rz_{i}$,
namely
\begin{equation}\label{omegaR}
 \omega_{i, R}(x)=\omega(x- Rz_i ). 
\end{equation}

Moreover, the threshold  guiding  all the asymptotic estimates is 
\begin{equation}\label{defeps}
 \varepsilon_R= \int_{\mathbb{R}^N} (I_{\alpha} \ast \omega_{1, R}^2) \omega_{1, R}  \omega_{2, R}=\int_{\mathbb{R}^N} (I_{\alpha} \ast \omega_{2, R}^2) \omega_{2, R}  \omega_{1, R}. 
 \end{equation}

The precise decay of $\eps_{R}$
is obtained in the following lemma.

\begin{lemma}\label{le:estepsnew}
Let $\alpha\in ((N-4)^{+},N-1]$.   Then, for $R$ large enough,  the following conclusions hold.
\begin{equation}\label{eq:esteps} 
\varepsilon_R  \sim
\begin{cases} 
e^{- 2 \sqrt{V_{\infty}} R}R^{-\frac{N-1}{2}} ,  & \text{if }\alpha< N-1.
\\
e^{-2  \sqrt{V_{\infty}} R} R^{ - \frac{N-1}2+ \nu  \sqrt{V_{\infty}}} ,& \text{if }\alpha= N-1.
\end{cases}
\end{equation}
where $\nu$ is introduced in \eqref{Q}.
 \end{lemma}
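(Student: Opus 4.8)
The plan is to estimate $\eps_R = \int_{\R^N}(I_\alpha\ast\omega_{1,R}^2)\,\omega_{1,R}\,\omega_{2,R}$ by first replacing the convolution factor by its known asymptotic profile, and then applying the convolution-estimate Lemma \ref{ACR} with the appropriate exponents read off from Theorem \ref{thm:decay}. First I would exploit translation invariance: writing $\xi = R(z_2-z_1)$, we have $|\xi| = 2R$ and $\eps_R = \int_{\R^N}(I_\alpha\ast\omega^2)(x)\,\omega(x)\,\omega(x-\xi)\,dx$, so the problem is reduced to a two-bump interaction integral with separation $2R$. The function $g(x):=(I_\alpha\ast\omega^2)(x)\,\omega(x)$ plays the role of $u$ in Lemma \ref{ACR}, and $\omega(x-\xi)$ plays the role of $v_\xi$. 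Since $\omega$ is continuous, positive and radial, and $I_\alpha\ast\omega^2$ is radial, $g$ is a positive continuous radial function, so Lemma \ref{ACR} applies once I pin down its decay exponents.

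Next I would determine the asymptotics of $g$. By \eqref{decayconv}, $I_\alpha\ast\omega^2(x) = I_\alpha(x)\,\|\omega\|_2(1+o(1)) \sim |x|^{-(N-\alpha)}$ as $|x|\to\infty$. Combining with the decay of $\omega$ from Theorem \ref{thm:decay}: when $\alpha<N-1$, $\omega(x)\sim |x|^{-(N-1)/2}e^{-\sqrt{V_\infty}|x|}$, hence $g(x)\sim |x|^{-(N-\alpha)-(N-1)/2}e^{-\sqrt{V_\infty}|x|}$; when $\alpha=N-1$, $\omega(x)\sim |x|^{-(N-1-\nu\sqrt{V_\infty})/2}e^{-\sqrt{V_\infty}|x|}$ so $g(x)\sim |x|^{-1-(N-1-\nu\sqrt{V_\infty})/2}e^{-\sqrt{V_\infty}|x|}$. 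In either case $g$ has exponential rate $b=\sqrt{V_\infty}$ and $v=\omega$ has the same rate $b'=\sqrt{V_\infty}$, so we are in case (ii) of Lemma \ref{ACR} with equal exponential rates. I would then identify, in each case, which of the two power exponents is the larger (playing the role of $a$) and which the smaller (playing the role of $a'$), and verify that the smaller one, $a'$, satisfies $a' < -\tfrac{N+1}{2}$, so that the third alternative of case (ii) is the one that triggers and we get $\int u_\xi v \sim e^{-b|\xi|}|\xi|^{a}$ with $a$ the \emph{larger} of the two exponents.

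Concretely: in the case $\alpha<N-1$ the two candidate exponents are $a_g = -(N-\alpha)-\tfrac{N-1}{2}$ (for $g$) and $a_\omega = -\tfrac{N-1}{2}$ (for $\omega$); since $N-\alpha>0$ we have $a_g < a_\omega$, so $a=a_\omega=-\tfrac{N-1}{2}$ and $a'=a_g$, and one checks $a' = -(N-\alpha)-\tfrac{N-1}{2} < -\tfrac{N+1}{2}$ because $N-\alpha>1$. Lemma \ref{ACR}(ii) then yields $\eps_R \sim e^{-\sqrt{V_\infty}\cdot 2R}(2R)^{-(N-1)/2} \sim e^{-2\sqrt{V_\infty}R}R^{-(N-1)/2}$, which is the first line of \eqref{eq:esteps}. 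In the case $\alpha=N-1$ the exponents are $a_g = -1-\tfrac{N-1-\nu\sqrt{V_\infty}}{2}$ and $a_\omega = -\tfrac{N-1-\nu\sqrt{V_\infty}}{2}$; again $a_g < a_\omega$, and $a' = a_g = -1 - \tfrac{N-1}{2}+\tfrac{\nu\sqrt{V_\infty}}{2}$, which is $< -\tfrac{N+1}{2}$ precisely when $\nu\sqrt{V_\infty}<1$ — this must be checked from the definition \eqref{Q} of $\nu$ (or one shows the relevant alternative holds regardless, since even the logarithmic or first alternative would only change the power by a $\log$ or by $a'+\tfrac{N+1}{2}$, and the claimed rate $R^{-(N-1)/2+\nu\sqrt{V_\infty}}$ matches $a=a_\omega$ exactly, so the third alternative is the intended one). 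The conclusion is $\eps_R \sim e^{-2\sqrt{V_\infty}R}R^{-(N-1)/2+\nu\sqrt{V_\infty}}$, the second line of \eqref{eq:esteps}.

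The main obstacle I anticipate is the bookkeeping needed to legitimately invoke Lemma \ref{ACR}: that lemma is stated for functions with \emph{exact} asymptotics $|x|^a e^{-b|x|}$, whereas $g = (I_\alpha\ast\omega^2)\,\omega$ is only known asymptotically and may not be monotone or even defined by a clean formula near the origin. One handles this by a standard squeeze — for any $\delta>0$ there are radial comparison functions $c_1|x|^{a\pm\delta}e^{-b|x|}$ bounding $g$ from above and below outside a large ball (and $g$ bounded, $\omega$ integrable against it inside the ball contributes a lower-order term), apply Lemma \ref{ACR} to the comparison functions, and let $\delta\to0$; since the target rate $e^{-2\sqrt{V_\infty}R}R^{a}$ is continuous in the exponent, this pins down $\eps_R$ up to the $\sim$ relation. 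The second, more delicate point is confirming the strict inequality $a' < -\tfrac{N+1}{2}$ in the $\alpha=N-1$ case, i.e.\ that $\nu\sqrt{V_\infty}<1$; if this cannot be guaranteed unconditionally one should state it, but in any event the dominant exponent $a$ — which is what appears in \eqref{eq:esteps} — is unaffected, since $a' \le a$ always and the claimed power is exactly $|\xi|^{a}$.
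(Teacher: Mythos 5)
Your scaffolding is the same as the paper's: rewrite $\eps_R=\int_{\R^N}(I_\alpha\ast\omega^2)\,\omega\,\omega(\cdot-\xi)$ with $|\xi|=2R$, read off the decay of $g=(I_\alpha\ast\omega^2)\omega$ from \eqref{decayconv} and Theorem \ref{thm:decay}, and apply Lemma \ref{ACR} with $b=b'=\sqrt{V_\infty}$. Your treatment of $\alpha<N-1$ is correct and coincides with the paper's (third alternative of case (ii), power $-\frac{N-1}{2}$). The case $\alpha=N-1$, however, contains a genuine error. There $a=a_\omega=-\frac{N-1}{2}+\frac{\nu\sqrt{V_\infty}}{2}$ and $a'=a_g=a-1=-\frac{N+1}{2}+\frac{\nu\sqrt{V_\infty}}{2}$, so $a'<-\frac{N+1}{2}$ would require $\nu\sqrt{V_\infty}<0$; your criterion ``$\nu\sqrt{V_\infty}<1$'' is a miscalculation, and since $\nu>0$ one always has $a'>-\frac{N+1}{2}$. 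Thus it is the \emph{first} alternative of Lemma \ref{ACR}(ii) that applies, not the third.

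Your fallback — that the choice of alternative does not matter because the claimed rate ``matches $a=a_\omega$ exactly'' — rests on a factor-of-two slip: $a_\omega=-\frac{N-1}{2}+\frac{\nu\sqrt{V_\infty}}{2}$, whereas the lemma claims the power $-\frac{N-1}{2}+\nu\sqrt{V_\infty}$. Following your argument as written you would conclude $\eps_R\sim e^{-2\sqrt{V_\infty}R}R^{-\frac{N-1}{2}+\frac{\nu\sqrt{V_\infty}}{2}}$, which is not the statement, and the assertion that the resulting power is $|\xi|^{a}$ ``in any event'' is false precisely because the first alternative gives the strictly larger exponent $a+a'+\frac{N+1}{2}=2a-1+\frac{N+1}{2}=-\frac{N-1}{2}+\nu\sqrt{V_\infty}$ — which is exactly the claimed rate, and is how the paper concludes. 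So the second line of \eqref{eq:esteps} is recovered only after correcting the regime identification; as written, your proof of that case does not go through. (Minor remark: the squeeze argument you propose to justify applying Lemma \ref{ACR} is unnecessary, since the relation $f\sim g$ as defined in Section \ref{estimates} is precisely the hypothesis of that lemma, and $g=(I_\alpha\ast\omega^2)\omega$ satisfies it by \eqref{decayconv} and Theorem \ref{thm:decay}.)
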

 \begin{remark}\label{rem:p=2}
Notice that, for any $\alpha \in (0, N)$, it results 
\[
\varepsilon_R  \ge C R^{-\frac{N-1}{2}} e^{-   \sqrt{V_{\infty}} Q(2R   )} \]
where $Q$ is introduced in \eqref{Q}. 
Indeed,  one has
\[ \inf_{x \in B_1(0)} I_{\alpha} \ast \omega^2(x) \ge  \inf_{x \in B_1(0)} \frac{A_{\alpha}}{R_0^{N-\alpha}} \int_{B_{R_0}(x)} \omega^2(y) \, dy 
\ge 
A_{\alpha} \frac{\abs{B_{R_0}(0)}}{R_0^{N-\alpha}} \min_{y \in B_{R_0+1}(0)} \omega^2(y) 
\ge C >0.\]
Therefore, denoting with $C$ possibly different constants and taking into
account the general decay given in \eqref{decay3} and recalling that $Q$
is monotone increasing for $t>\nu$, one gets
\[\begin{split}
\varepsilon_R  
&\ge 
\int_{B_1(Rz_1)} (I_{\alpha} \ast \omega_{i, R}^2) \omega_{i, R} \omega_{j, R} 
= \int_{B_1(0)} (I_{\alpha} \ast \omega^2(x)) \omega(x) \omega(x- R(z_2 -z_1)) \, dx \\
&\ge  \inf_{x \in B_1(0)} ( I_{\alpha} \ast \omega^2(x)  \omega(x) ) \int_{B_1(0)} \omega(x- R(z_2-z_1)) \, dx \\
&\ge  C \int_{B_1(0)} e^{-  \sqrt{V_{\infty}}Q(\abs{x - R(z_1- z_2)})} \frac{1}{(1+\abs{x-R(z_2-z_1)})^{\frac{N-1}{2}}} 
\\
&
\ge C R^{-\frac{N-1}{2}} e^{-  \sqrt{V_{\infty}} Q(1+2R )},
 \end{split}\]
for $R$ sufficiently large. Notice that 
\[
\begin{split} 
Q(1+2R ) -Q(2R ) &= \int_{\nu}^{1+2R }  \sqrt{ 1 - \frac{\nu^{N-\alpha}}{s^{N-\alpha}}}  \, ds - \int_{\nu}^{2R }  \sqrt{ 1- \frac{\nu^{N-\alpha}}{s^{N-\alpha}}}  \, ds 
\\
&= \int_0^1 \sqrt{ 1 - \frac{\nu^{N-\alpha}}{(t+2R)^{N-\alpha}}}  \, dt  \le c, \quad \text{as $R \to \infty$.}
\end{split}
\] 
In addition $Q(2R)\leq 2R-\nu$
so that $\eps_{R}\geq C_{0}R^{-\frac{N-1}2}e^{-2R\sqrt{V_{\infty}}}$, which shows that
\eqref{eq:esteps} is optimal for $\alpha<N-1$. 
On the other hand, when $\alpha=N-1$,
this estimate from below is consistent with estimates obtained in 
Lemma \ref{le:estepsnew} and also with estimates in \cite{clasal}, however it is far from being sharp.
\end{remark}

\begin{proof}[Proof of Lemma \ref{le:estepsnew}]
Let us first observe that, performing a change of variable
\begin{equation}\label{cambiovar}
\eps_{R}=\int_{\R^N} (I_{\alpha}\ast\omega^{2})(x)\omega (x)\omega(x-R(z_{1}-z_{2}))dx.
\end{equation}
We are going to apply Lemma \ref{ACR} with 
\begin{equation}\label{choiceAR}
v=I_{\alpha} \ast \omega^2 \omega , \qquad u=\omega, \qquad \xi=R(z_1 - 
z_2), \quad \text{and $|\xi|=2R$},
\end{equation}
with the exponents
\[
b=b'=\sqrt{V_{\infty}}, \quad a=-\frac{N-1}{2},    \quad a'=a-N+\alpha<a
\]
and $a'<-(N+1)/2$ iff $\alpha<N-1$, so that in this case  we get the first information in \eqref{eq:esteps}, while when  $\alpha=N-1$ we have to consider \eqref{decaypertpol}
\[
b=b'=\sqrt{V_{\infty}}\quad a=-\frac{N-1}{2}+\frac{\nu\sqrt{V_{\infty}}}2,  \quad a'=a-N+\alpha=a-1<a 
\]
and now $a'>-(N+1)/2$ as $a>-(N-1)/2$ so that the second information in \eqref{eq:esteps}  follows.
\end{proof}
Let us now prove the asymptotic estimates on the term with $V(x)$ that  will be used in
the following.
\begin{lemma}\label{estimatesV}
Let  $N\geq 2$, and $\alpha \in ((N-4)^{+}, N-1]$
and assume \eqref{V1}, \eqref{Vdecaygen}, \eqref{ipo:sigma}. 
Then, for $R$ large enough, it results
\[ \mathcal{A}_V:=\int_{\mathbb{R}^N} (V(x) - V_{\infty})\left(\omega_{i, R}\right)^2 \leq  o(\varepsilon_R), \quad \text{for $i=1, 2$.}
 \]
\end{lemma}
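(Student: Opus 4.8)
The plan is to bound $\mathcal{A}_V$ using the decay hypothesis \eqref{Vdecaygen} on $V$ together with the decay estimates for $\omega$ from Theorem \ref{thm:decay}, and then compare the resulting quantity with the sharp asymptotics of $\eps_R$ from Lemma \ref{le:estepsnew}. First, since $(V(x)-V_\infty)_+ \le A_0 \abs{x}^\sigma e^{-\beta\abs{x}}$ and $\mathcal{A}_V = \int (V(x)-V_\infty)\omega_{i,R}^2 \le \int (V(x)-V_\infty)_+ \omega_{i,R}^2$, a change of variable $x \mapsto x + Rz_i$ gives
\[
\mathcal{A}_V \le A_0 \int_{\R^N} \abs{x+Rz_i}^\sigma e^{-\beta\abs{x+Rz_i}} \, \omega(x)^2 \, dx.
\]
This is exactly an integral of the form treated by Lemma \ref{ACR} (with $\xi = -Rz_i$, $\abs{\xi}=R$): taking $u(x) = \abs{x}^\sigma e^{-\beta\abs{x}}$ and $v(x) = \omega(x)^2$, which behaves like $\abs{x}^{2a}e^{-2\sqrt{V_\infty}\abs{x}}$ with $a = -\tfrac{N-1}{2}$ for $\alpha < N-1$ (resp. $a = -\tfrac{N-1}{2}+\tfrac{\nu\sqrt{V_\infty}}{2}$ for $\alpha = N-1$). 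One has to be slightly careful that $u$ here is centered at the origin and translated, while $v$ is fixed; but the roles in Lemma \ref{ACR} are symmetric, so the statement applies directly.

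The key is then a case analysis on $\beta$ versus $2\sqrt{V_\infty}$. \emph{Case $\beta > 2\sqrt{V_\infty}$:} here the exponential rate of $u$ exceeds that of $v$, so Lemma \ref{ACR}(i) (in the form where one replaces the exponents with the slower decay, i.e. those of $v$) yields $\mathcal{A}_V \lesssim R^{2a} e^{-2\sqrt{V_\infty} R}$, which is precisely of the same order as $\eps_R$ (for $\alpha<N-1$) or $R^{-(N-1)+\nu\sqrt{V_\infty}}e^{-2\sqrt{V_\infty}R}$ (for $\alpha = N-1$); in both subcases this equals the order of $\eps_R$ up to a constant, hence $\mathcal{A}_V = O(\eps_R)$ — but to get $o(\eps_R)$ one actually needs a strict gain, which comes from the fact that when $\beta > 2\sqrt{V_\infty}$ the exponential decay of $u$ is \emph{strictly} faster, so the integral localizes near $x = -Rz_i$ where $\omega(x)^2$ is exponentially small of order $e^{-2\sqrt{V_\infty}R}$ but the weight $\abs{x+Rz_i}^\sigma e^{-\beta\abs{x+Rz_i}}$ contributes an extra decaying factor; a direct splitting of the domain into $\{\abs{x}\le R/2\}$ and its complement, estimating each piece crudely, produces a bound $e^{-\min(2\sqrt{V_\infty},\,\beta/2 + \sqrt{V_\infty})R}$ times a polynomial, which is $o(\eps_R)$ since $\beta/2+\sqrt{V_\infty} > 2\sqrt{V_\infty}$. \emph{Case $\beta = 2\sqrt{V_\infty}$:} now both exponential rates in Lemma \ref{ACR}(ii) coincide — the rate for $u$ is $\beta = 2\sqrt{V_\infty}$ and for $v$ is $2\sqrt{V_\infty}$ — wait, one must match conventions: Lemma \ref{ACR}(ii) compares functions with exponential rate $b$, and here $\omega^2$ has rate $2\sqrt{V_\infty}$ while $\abs{x}^\sigma e^{-\beta \abs{x}}$ has rate $\beta = 2\sqrt{V_\infty}$, so $b = b' = 2\sqrt{V_\infty}$, with exponents $\sigma$ and $2a$. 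Applying (ii) with the relevant threshold $-(N+1)/2$ and with $|\xi|=R$ gives $\mathcal{A}_V \lesssim R^{\sigma + 2a + \frac{N+1}{2}} e^{-2\sqrt{V_\infty}R}$ when both $\sigma, 2a > -(N+1)/2$ (or the appropriate degenerate variant), while $\eps_R \sim R^{2a - N + \alpha + \text{(polynomial correction)}} e^{-2\sqrt{V_\infty}R}$; comparing the polynomial powers, $\mathcal{A}_V = o(\eps_R)$ reduces to a strict inequality between exponents which is exactly what the hypothesis \eqref{ipo:sigma} on $\sigma$ is designed to guarantee.

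Concretely, in the $\alpha = N-1$ subcase of $\beta = 2\sqrt{V_\infty}$ the condition $\sigma < \min\{-1, -\tfrac{N-1}{2}+\nu\sqrt{V_\infty}\}$ should be unwound: one needs the exponent of $R$ in the bound for $\mathcal{A}_V$ to be strictly smaller than $2a - 1 = -(N-1) + \nu\sqrt{V_\infty} - 1$, the exponent in $\eps_R \sim R^{-\frac{N-1}{2}+\nu\sqrt{V_\infty}}e^{-2\sqrt{V_\infty}R}$ — here I need to be careful about whether $\eps_R$'s exponent is $-\tfrac{N-1}{2}+\nu\sqrt{V_\infty}$ (from Lemma \ref{le:estepsnew}) and track the $2a$ versus $a$ bookkeeping, since $v$ in the present lemma is $\omega^2$ not $\omega$. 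In every case the bound for $\mathcal{A}_V$ from Lemma \ref{ACR}(ii) is $\sim R^{\gamma}e^{-2\sqrt{V_\infty}R}$ for an explicit $\gamma$ depending on $\sigma$ and $N$, possibly with a $\log R$ factor in the degenerate subcase $\sigma = -(N+1)/2$ (which is allowed since the hypothesis only asks $\sigma < \dots$, strictly), and one checks $\gamma < \gamma_{\eps_R}$ using \eqref{ipo:sigma}; when $\sigma$ is so negative that $u$ itself has $\sigma < -(N+1)/2$, the last branch of Lemma \ref{ACR}(ii) gives the even smaller bound $R^{2a}e^{-2\sqrt{V_\infty}R}$, again $o(\eps_R)$.

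The main obstacle is the careful bookkeeping of polynomial exponents — in particular correctly accounting for the factor $\omega^2$ (so the relevant exponent is $2a$, not $a$, with the polynomial correction $\nu\sqrt{V_\infty}$ rather than $\nu\sqrt{V_\infty}/2$ when $\alpha = N-1$), matching the sign conventions of Lemma \ref{ACR} (which assumes both functions share the \emph{same} center up to translation of one of them, whereas here it is the potential-weight that is ``translated'' relative to the fixed $\omega^2$), and verifying that each of the three branches of \eqref{ipo:sigma} produces the strict inequality $\gamma < \gamma_{\eps_R}$ needed for the little-$o$ rather than big-$O$. None of these steps is deep, but the inequalities must be aligned precisely, which is presumably why the hypothesis \eqref{ipo:sigma} takes the particular form it does; indeed, as the authors remark, the thresholds on $\sigma$ are expected to be optimal for this comparison argument.
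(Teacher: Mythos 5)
Your overall strategy is the same as the paper's (majorize $V-V_\infty$ by a radial weight, apply Lemma \ref{ACR} to $\omega_{i,R}^2$ against that weight with $\abs{\xi}\sim R$, and compare with $\eps_R$ from Lemma \ref{le:estepsnew}), but the two cases are not actually carried out correctly. In the case $\beta>2\sqrt{V_\infty}$ you misquote the size of $\eps_R$: Lemma \ref{le:estepsnew} gives $\eps_R\sim R^{-\frac{N-1}{2}}e^{-2\sqrt{V_\infty}R}$ (resp.\ $R^{-\frac{N-1}{2}+\nu\sqrt{V_\infty}}e^{-2\sqrt{V_\infty}R}$), not $R^{2a}e^{-2\sqrt{V_\infty}R}$ with $2a=-(N-1)$. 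Hence the bound $\mathcal{A}_V\lesssim R^{-(N-1)}e^{-2\sqrt{V_\infty}R}$ coming straight from Lemma \ref{ACR}(i) is already $o(\eps_R)$, the gain being the purely polynomial factor $R^{-\frac{N-1}{2}}$ --- there is no exponential gain to be had, and none is needed. Your substitute ``localization/splitting'' step does not repair anything: the exponent you produce, $\min\{2\sqrt{V_\infty},\beta/2+\sqrt{V_\infty}\}$, equals $2\sqrt{V_\infty}$ precisely because $\beta>2\sqrt{V_\infty}$, i.e.\ the same rate as $\eps_R$, and the accompanying polynomial is left uncontrolled, so the conclusion ``$o(\eps_R)$ since $\beta/2+\sqrt{V_\infty}>2\sqrt{V_\infty}$'' is a non sequitur. (A smaller point: for $\sigma\le -N$ your majorant $\abs{x+Rz_i}^{\sigma}e^{-\beta\abs{x+Rz_i}}$ is not even locally integrable, and not continuous as Lemma \ref{ACR} requires; one must use $(1+\abs{x})^{\sigma}$, as the paper does, exploiting that $V$ is continuous and bounded near the origin.)

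The more serious gap is the case $\beta=2\sqrt{V_\infty}$, where you invoke the first branch of Lemma \ref{ACR}(ii) ``when both $\sigma,2a>-(N+1)/2$'' --- but $2a=-(N-1)\le-(N+1)/2$ for every $N\ge3$ (with a $\nu\sqrt{V_\infty}$ shift when $\alpha=N-1$), so that branch is generically not the relevant one; you also write $\eps_R\sim R^{2a-N+\alpha+\cdots}e^{-2\sqrt{V_\infty}R}$, which is not the exponent of Lemma \ref{le:estepsnew}. Beyond these slips, the decisive step is simply deferred: ``one checks $\gamma<\gamma_{\eps_R}$ using \eqref{ipo:sigma}'' is exactly what has to be proved, and it is where all the work lies. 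The paper's proof is a branch-by-branch verification: $N=2$ with $\alpha<1$ and $\alpha=1$ (where one may assume w.l.o.g.\ $\sigma>-3/2$ by monotonicity in $\sigma$); $N\ge3$, $\alpha<N-1$, where the $\omega^2$-exponent $1-N$ sits at or below the threshold $-(N+1)/2$ and the resulting bound is $R^{\sigma}$ (or $R^{\sigma}\log R$ when $N=3$), compared against $R^{-\frac{N-1}{2}}$; and $N\ge3$, $\alpha=N-1$, which splits into three sub-cases according to whether $\nu\sqrt{V_\infty}\le\frac{N-3}{2}$, $\frac{N-3}{2}<\nu\sqrt{V_\infty}\le N-2$, or $\nu\sqrt{V_\infty}>N-2$, each selecting a different branch of Lemma \ref{ACR}(ii) and each requiring a different unwinding of \eqref{ipo:sigma} (again with a w.l.o.g.\ reduction when $\sigma$ is very negative). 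None of this bookkeeping appears in your proposal, so as written it establishes the lemma in neither case; the $\beta>2\sqrt{V_\infty}$ case is easily fixed by quoting $\eps_R$ correctly, but the $\beta=2\sqrt{V_\infty}$ case needs the full exponent analysis.
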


\begin{remark}
In the proof of this Lemma we will exploit Lemma \ref{ACR}.
Notice that, in order to do this, we will first make use of \eqref{Vdecaygen} 
which gives an upper bound on $V$ by a positive radial function.
As a consequence, we will get the conclusion, even if $V(x)-V_{\infty}$ is not radial, nor positive.
\end{remark}

\begin{proof} 

We want now to apply Lemma \ref{ACR}. However, this only applies to radial functions, hence we preliminary notice that 
\[ \int_{\mathbb{R}^N} (V(x) - V_{\infty}) \omega_{i, R}^2 \le  C \int_{\mathbb{R}^N}  (\abs{x}+1)^\sigma e^{-\beta \abs{x}} \omega_{i, R}^2. \]
Take
\[ u=\omega^2, \quad v=  (\abs{x}+1)^\sigma e^{-\beta \abs{x}}, \quad
\text{and $\xi=Rz_i$. }
\]
Notice that 
\[\begin{split}
u &\le Ce^{-2\sqrt{V_{\infty}}\abs{x}} \abs{x}^{-N+1},\qquad\quad\quad \text{  for $\alpha< N-1$}
\\
u &\le Ce^{-2\sqrt{V_{\infty}}\abs{x}} \abs{x}^{-N+1+\nu\sqrt{V_{\infty}}}, \quad\,\;\text{for $\alpha=N-1$, }
\end{split}\]
with $C$ a suitable positive constant.
Then, in the case $\beta>2\sqrt{V_{\infty}}$ Lemma \ref{ACR} immediately implies
the conclusion.
Let us now deal with  the  case $\beta=2\sqrt{V_{\infty}}$ considering first $N=2$
and $\alpha<N-1$, i.e. $\alpha\in (0,1)$.
we  again exploit Lemma \ref{ACR} with (recalling  \eqref{ipo:sigma})
\[
 b=b'=2\sqrt{V_{\infty}}\quad a=-1,  \quad a'=\sigma< -1 ;
\]
then one has $a>a'$, and we can suppose without loss of generality that
$a'>-3/2$, so that Lemma \ref{ACR} yields the conclusion as $\sigma<-1$.
If $N=2,\,\alpha=1$, we have
\[
 b=b'=2\sqrt{V_{\infty}}\quad a=-1+\nu\sqrt{V_{\infty}},  \quad a'=\sigma<-1;
\]
so that $a>a'$ and again we can suppose that $a'>-3/2$, and we reach the conclusion taking into account \eqref{eq:esteps} .
When $N\geq 3$ and $\alpha <N-1$ we can use Lemma \ref{ACR} with
\[
b=b'=2\sqrt{V_{\infty}},\quad a=1-N, \quad   a'=\sigma< -\frac{N-1}2 
\]
and $a'>a$ and $a<-(N+1)/2$ for every $N>3$ and $a=-(N+1)/2$ if $N=3$.
In both cases, we get $a'$ as the exponent of the polynomial part
and the conclusion  follows as $a'=\sigma<-\frac{N-1}2 $.
As a last case, let us consider $N\geq 3$ and $\alpha=N-1$. Take 
\[
b=b'=2\sqrt{V_{\infty}},\quad a=1-N+\nu\sqrt{V_{\infty}}, \quad   
a'=\sigma< \min\left\{-1,-\frac{N-1}2 +\nu\sqrt{V_{\infty}}\right\}.
\]
If $\nu\sqrt{V_{\infty}}\leq \frac{N-3}2$ we have $a\leq a'$ and  $a\leq -(N+1)/2$
so that the exponent in the polynomial term will be  $a'=\sigma$ yielding 
the conclusion as $\sigma <-\frac{N-1}2+\nu\sqrt{V_{\infty}}$.
In the case in which $\frac{N-3}2<\nu\sqrt{V_{\infty}}\leq N-2$ one has 
$a\leq a'$ and  $a>-(N+1)/2$ so that the exponent in the polynomial term will be  $a'+a+\frac{N+1}2=\sigma-\frac{N-3}2+\nu\sqrt{V_{\infty}}$ implying again 
the conclusion thanks to \eqref{ipo:sigma}.
Finally, when $\nu\sqrt{V_{\infty}}>N-2$ it results $a>a'$, and as
$-(N+1)/2<-(N-1)/2+\nu\sqrt{V_{\infty}}$ for every $N$ we can suppose
w.l.o.g. that $\sigma >-(N+1)/2$ so that the exponent in the polynomial
term will be $a+\sigma+\frac{N+1}2$ which again gives a decay faster than
the one of $\eps_{R}$.
\end{proof}
Let us conclude this section by studying the nonlinearity term.
\begin{proposition}\label{prop:somma}
Given $s,\,t \in (0,+\infty)$, it results  
\[ \begin{split}
 \int_{\mathbb{R}^N} \left(I_{\alpha} \ast (s\omega_{1,R}+t\omega_{2,R})^2 \right)
 (s\omega_{1,R}+t\omega_{2,R})^2
&-  s^{4}\int_{\mathbb{R}^N} (I_{\alpha} \ast  \omega_{1, R}^2)  \omega_{1, R}^2 -
t^{4}\int_{\mathbb{R}^N} (I_{\alpha} \ast  \omega_{2, R}^2)  \omega_{2, R}^2 
\\
&\geq  4st(s^{2}+t^{2}) \eps_{R}.
\end{split} \]
where $\eps_{R}$ is defined in \eqref{defeps}.
\end{proposition}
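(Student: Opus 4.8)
The plan is to expand the fourth-order convolution form multilinearly and discard the manifestly nonnegative cross terms, keeping only those that reproduce $\eps_R$. Writing $u = s\omega_{1,R}$, $v = t\omega_{2,R}$, bilinearity of the map $(f,g)\mapsto \int_{\R^N}(I_\alpha\ast f)g$ (which is symmetric and, by the positivity of the Riesz kernel, positive semidefinite) gives
\[
\int_{\R^N}\bigl(I_\alpha\ast(u+v)^2\bigr)(u+v)^2 = \int_{\R^N}\bigl(I_\alpha\ast(u^2+2uv+v^2)\bigr)(u^2+2uv+v^2),
\]
and expanding the product yields ten terms: $\int(I_\alpha\ast u^2)u^2$, $\int(I_\alpha\ast v^2)v^2$, the ``mixed-square'' term $2\int(I_\alpha\ast u^2)v^2 = 2\int(I_\alpha\ast v^2)u^2$, the terms $4\int(I_\alpha\ast u^2)uv$ and $4\int(I_\alpha\ast v^2)uv$, and the term $4\int(I_\alpha\ast uv)uv$. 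After subtracting $s^4\int(I_\alpha\ast\omega_{1,R}^2)\omega_{1,R}^2$ and $t^4\int(I_\alpha\ast\omega_{2,R}^2)\omega_{2,R}^2$, which exactly cancel the first two terms, the left-hand side of the claimed inequality equals
\[
2\!\int_{\R^N}\!(I_\alpha\ast u^2)v^2 + 4\!\int_{\R^N}\!(I_\alpha\ast uv)uv + 4\!\int_{\R^N}\!(I_\alpha\ast u^2)uv + 4\!\int_{\R^N}\!(I_\alpha\ast v^2)uv.
\]

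Now I substitute back $u=s\omega_{1,R}$, $v=t\omega_{2,R}$. All four integrands are pointwise nonnegative because $\omega$ is positive and $I_\alpha>0$, so every term is $\ge 0$; in particular the first two terms (with coefficients $2s^2t^2$ and $4s^2t^2$) can simply be dropped. The remaining two terms are
\[
4s^3t\int_{\R^N}(I_\alpha\ast\omega_{1,R}^2)\omega_{1,R}\omega_{2,R} + 4st^3\int_{\R^N}(I_\alpha\ast\omega_{2,R}^2)\omega_{2,R}\omega_{1,R},
\]
and by the very definition \eqref{defeps} of $\eps_R$ each of the two integrals equals $\eps_R$. Hence the left-hand side is bounded below by $4s^3t\,\eps_R + 4st^3\,\eps_R = 4st(s^2+t^2)\eps_R$, which is exactly the asserted inequality.

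There is essentially no obstacle here: the argument is a purely algebraic expansion combined with the two sign facts that $I_\alpha$ has a positive kernel and $\omega_{i,R}\ge 0$. The only point requiring a word of care is the symmetry identity $\int(I_\alpha\ast u^2)v^2 = \int(I_\alpha\ast v^2)u^2$ and the fact that the two integrals defining $\eps_R$ in \eqref{defeps} genuinely coincide after translation — but the latter is already asserted in the statement of \eqref{defeps} and follows from the change of variables used in \eqref{cambiovar}. One could optionally remark that the inequality is in fact an equality up to the dropped nonnegative terms $2s^2t^2\int(I_\alpha\ast\omega_{1,R}^2)\omega_{2,R}^2 + 4s^2t^2\int(I_\alpha\ast(\omega_{1,R}\omega_{2,R}))\omega_{1,R}\omega_{2,R}$, but for the application only the stated lower bound is needed.
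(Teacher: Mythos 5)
Your proposal is correct and takes essentially the same route as the paper: expand the quartic convolution form, cancel the two pure terms against the subtracted ones, discard the nonnegative terms $2s^{2}t^{2}\int_{\R^{N}}(I_{\alpha}\ast\omega_{1,R}^{2})\,\omega_{2,R}^{2}$ and $4s^{2}t^{2}\int_{\R^{N}}\bigl(I_{\alpha}\ast(\omega_{1,R}\omega_{2,R})\bigr)\omega_{1,R}\omega_{2,R}$, and recognize the remaining cross terms as $\eps_{R}$ using the Fubini symmetry of the bilinear form, which is exactly the identity the paper verifies. The only difference is cosmetic (the paper writes the cross terms with $\omega_{1,R}\omega_{2,R}$ inside the convolution and then swaps by Fubini, while you write them with $\omega_{i,R}^{2}$ inside from the start), so no further comment is needed.
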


\begin{proof}
Direct computations show
\[\begin{split}
 \int_{\mathbb{R}^N} \left(I_{\alpha} \ast (s\omega_{1,R}+t\omega_{2,R})^2 \right)
 (s\omega_{1,R}+t\omega_{2,R})^2
\geq&
s^{4}\int_{\mathbb{R}^N} (I_{\alpha} \ast  \omega_{1, R}^2)  \omega_{1, R}^2 +
t^{4}\int_{\mathbb{R}^N} (I_{\alpha} \ast  \omega_{2, R}^2)  \omega_{2, R}^2 
\\
+4st (s^{2}+t^{2})\eps_{R}
\end{split}\]
where we have used that
\[
\int_{\R^{N}}\left(I_{\alpha}\ast \omega_{i,R}^{2}\right)\omega_{j,R}^{2}\geq 0 \qquad \text{for every $i,\,j=1,2$ with $i\neq  j$}
\]
and that
\[\begin{split}
\int_{\R^{N}}\left(I_{\alpha}\ast \omega_{i,R}\omega_{j,R}\right)\omega^{2}_{i,R}
&=
\int_{\R^{N}}\omega^{2}(\theta-Rz_{i}) d\theta
\int_{\R^{N}}\frac{\omega(x-Rz_{i})\omega(x-Rz_{j})}{|x-\theta|^{N-\alpha}}dx
\\
&=\int_{\R^{N}}
\omega(x-Rz_{i})\omega(x-Rz_{j}) dx
\int_{\R^{N}}\frac{\omega^{2}(\theta-Rz_{i}) }{|x-\theta|^{N-\alpha}}d\theta
\\
&=\int_{\R^{N}}\left(I_{\alpha}\ast \omega_{i,R}^{2}\right)\omega_{j,R}\omega_{i,R}=\eps_{R}.
\end{split}\]

\end{proof}

%%%%%%%%%%

%%%%%%%%%\section{Proof of Theorem \ref{thm no sym}}
\section{Proof of Theorems \ref{thm:abstract} and \ref{thm:uniq}}
\label{final}

Let us start this section by proving some results concerning the compactness 
properties of the functional ${\mathcal I}_{V}$.
These are  nowadays quite classical in  this context, 
we will follow arguments in \cite{MaiaPellacci, ClappMaia}. 
\begin{lemma}\label{freePS}
Let $\mathcal{N}_V $ be defined in \eqref{defnehari}.
Any sequence $(u_k)$ such that
\[ u_k \in \mathcal{N}_V \quad \text{ and} \quad \mathcal{I}_V(u_k) \to d,\quad
\nabla _{\mathcal  N} \mathcal{I}_V(u_k)\to 0,
\]
satisfies \(\nabla \mathcal{I}_V(u_k)\to 0 \) in $H^{-1}(\R^{N})$
and $(u_{k})$ has a subsequence which is bounded in $H^1(\R^{N})$.
\end{lemma}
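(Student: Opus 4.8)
The plan is to prove the two assertions of Lemma~\ref{freePS} in turn: first the boundedness of $(u_k)$ in $H^1(\R^N)$, and then the fact that the ``free'' gradient $\nabla\mathcal{I}_V(u_k)$ tends to zero in $H^{-1}(\R^N)$, i.e.\ that the constrained Palais--Smale condition on $\mathcal{N}_V$ implies the free one.

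\medskip

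\textbf{Boundedness.}
First I would extract information from membership in $\mathcal{N}_V$ and from the level. Since $u_k\in\mathcal{N}_V$, we have
\[
\langle \mathcal{I}'_V(u_k),u_k\rangle = \norm{u_k}_V^2 - \int_{\R^N}(I_\alpha\ast u_k^2)u_k^2 = 0,
\]
so $\int_{\R^N}(I_\alpha\ast u_k^2)u_k^2 = \norm{u_k}_V^2$. Substituting into the functional,
\[
\mathcal{I}_V(u_k) = \frac12\norm{u_k}_V^2 - \frac14\norm{u_k}_V^2 = \frac14\norm{u_k}_V^2.
\]
Since $\mathcal{I}_V(u_k)\to d$, this already gives $\norm{u_k}_V^2 = 4\mathcal{I}_V(u_k)\to 4d$, hence $(u_k)$ is bounded in $H^1(\R^N)$ (using that $\norm{\cdot}_V$ is equivalent to the standard $H^1$-norm by \eqref{V1}). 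In fact this step does not even use the assumption on the constrained gradient; it is purely a consequence of the $2$-homogeneity/$4$-homogeneity structure of the two terms of $\mathcal{I}_V$ on the Nehari manifold.

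\medskip

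\textbf{From constrained to free Palais--Smale.}
For this I would use the standard Lagrange multiplier argument. Define $G_V(u):=\langle\mathcal{I}'_V(u),u\rangle = \norm{u}_V^2 - \int_{\R^N}(I_\alpha\ast u^2)u^2$, so that $\mathcal{N}_V = G_V^{-1}(0)\setminus\{0\}$. One computes
\[
\langle G_V'(u),u\rangle = 2\norm{u}_V^2 - 4\int_{\R^N}(I_\alpha\ast u^2)u^2,
\]
and for $u\in\mathcal{N}_V$ this equals $2\norm{u}_V^2 - 4\norm{u}_V^2 = -2\norm{u}_V^2$. The key point is that this is bounded away from zero: any $u\in\mathcal{N}_V$ satisfies $\norm{u}_V^2 = \int(I_\alpha\ast u^2)u^2 \le C\norm{u}_V^4$ by Hardy--Littlewood--Sobolev and Sobolev (in the admissible range \eqref{range2}), whence $\norm{u}_V^2 \ge C^{-1} =: \rho_0 > 0$; thus $\langle G_V'(u),u\rangle \le -2\rho_0 < 0$ uniformly on $\mathcal{N}_V$, and in particular $G_V'(u)\neq 0$, so $\mathcal{N}_V$ is a $C^1$ manifold and the constrained gradient $\nabla_{\mathcal N}\mathcal{I}_V$ is well defined. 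By the Lagrange multiplier rule there exists $\mu_k\in\R$ with
\[
\mathcal{I}'_V(u_k) = \nabla_{\mathcal N}\mathcal{I}_V(u_k) + \mu_k G_V'(u_k).
\]
Testing this identity against $u_k$ and using $\langle\mathcal{I}'_V(u_k),u_k\rangle = 0$ (since $u_k\in\mathcal{N}_V$) gives
\[
0 = \langle\nabla_{\mathcal N}\mathcal{I}_V(u_k),u_k\rangle + \mu_k\langle G_V'(u_k),u_k\rangle.
\]
Now $\langle\nabla_{\mathcal N}\mathcal{I}_V(u_k),u_k\rangle\to 0$ because $\nabla_{\mathcal N}\mathcal{I}_V(u_k)\to 0$ and $(u_k)$ is bounded (from the previous step), while $\langle G_V'(u_k),u_k\rangle = -2\norm{u_k}_V^2 \le -2\rho_0 < 0$ is bounded away from $0$; hence $\mu_k\to 0$. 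Finally, since $(u_k)$ is bounded, $G_V'(u_k)$ is bounded in $H^{-1}(\R^N)$, so $\mu_k G_V'(u_k)\to 0$ in $H^{-1}(\R^N)$, and therefore
\[
\nabla\mathcal{I}_V(u_k) = \nabla_{\mathcal N}\mathcal{I}_V(u_k) + \mu_k G_V'(u_k)\to 0 \quad\text{in } H^{-1}(\R^N),
\]
which is the desired conclusion.

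\medskip

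\textbf{Expected main obstacle.}
The only genuinely nontrivial ingredient is the uniform lower bound $\norm{u_k}_V^2\ge\rho_0>0$ on $\mathcal{N}_V$, which ensures both that $\mathcal{N}_V$ is a manifold near each $u_k$ and that the multiplier $\mu_k$ vanishes; this rests on the continuity of the map $u\mapsto\int_{\R^N}(I_\alpha\ast u^2)u^2$ on $H^1(\R^N)$ and the estimate $\int_{\R^N}(I_\alpha\ast u^2)u^2\le C\norm{u}_V^4$, both consequences of the Hardy--Littlewood--Sobolev inequality together with \eqref{range2}, as recorded after \eqref{defnehari} (cf.\ \cite[Proposition 3.1]{MorozVanSchaftJFPTA}). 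Everything else is the routine Lagrange-multiplier bookkeeping sketched above, essentially as in \cite{MaiaPellacci, ClappMaia}.
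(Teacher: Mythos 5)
Your proof is correct and follows essentially the same route as the paper: the paper simply defers to Corollary 3.2 of Clapp--Salazar \cite{clasal1}, whose argument is precisely the homogeneity identity $\mathcal{I}_V(u_k)=\tfrac14\norm{u_k}_V^2$ on $\mathcal{N}_V$ for boundedness plus the Lagrange-multiplier bookkeeping (with the uniform lower bound on $\norm{u}_V$ from Hardy--Littlewood--Sobolev forcing $\mu_k\to0$) that you spell out explicitly.
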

\begin{proof}
In order to show that $(u_{k})$ is a free Palais-Smale sequence, 
it is possible to perform the same  argument as in the local case thanks to
the homogeneity power of the Choquard nonlinearity. 
Indeed, the proof of Corollary 3.2 in \cite{clasal1}  can be adapted in a 
straightforward way to the framework without symmetries.
\end{proof}

\begin{lemma}[Splitting Lemma]\label{splitting}
Let $(u_k)$ be a bounded $(PS)_d$ sequence for $\mathcal{I}_V$. Up to a subsequence, there exists a solution $u_0$ of problem \eqref{Choquardeq}, a number $m \in \mathbb{N} \cup \{ 0 \}$, $m$ non trivial solutions $\omega^1, \omega^2, \dots, \omega^m$ to the limit problem \eqref{Choqlimit}, and $m$ sequences of points $(y_k^j) \in \mathbb{R}^N$, $1 \le j \le m$, satisfying
\begin{itemize}
\item[(i)] $\abs{y_k^j} \to \infty$ and $\abs{y_k^j - y_k^i } \to \infty $ if $i \ne j$
\item[(ii)] $u_k - \sum_{i=1}^{m} \omega^i (\cdot - y_k^i) \to u_0$ in $
H^{-1}(\mathbb{R}^N)$
\item[(iii)] $d=\mathcal{I}_V(u_0) + \sum_{i=1}^{m} \mathcal{I}_\infty(\omega^i)$.
\end{itemize}
\end{lemma}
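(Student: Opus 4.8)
The plan is to follow the now-standard concentration-compactness/profile decomposition scheme for Palais--Smale sequences, adapted to the nonlocal Choquard nonlinearity. Let $(u_k)$ be a bounded $(PS)_d$ sequence for $\mathcal{I}_V$. First I would extract, up to a subsequence, a weak limit $u_0 \in H^1(\R^N)$; a routine argument using the weak continuity of the Choquard term (via Hardy--Littlewood--Sobolev, as in \cite{MorozVanSchaftJFPTA}) shows that $\mathcal{I}_V'(u_0)=0$, so $u_0$ solves \eqref{Choquardeq}. Setting $v_k^1 := u_k - u_0$, the Brezis--Lieb-type splitting for the functional (which holds both for the quadratic part and for the quartic nonlocal part: one has $\int (I_\alpha\ast u_k^2)u_k^2 = \int (I_\alpha\ast u_0^2)u_0^2 + \int (I_\alpha\ast (v_k^1)^2)(v_k^1)^2 + o(1)$, and similarly for the derivative) yields $\mathcal{I}_V(v_k^1) = d - \mathcal{I}_V(u_0) + o(1)$ and, since $V\to V_\infty$ and $v_k^1\rightharpoonup 0$, also $\mathcal{I}_\infty(v_k^1) = d - \mathcal{I}_V(u_0)+o(1)$ with $\mathcal{I}_\infty'(v_k^1)\to 0$ in $H^{-1}$.

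Next I would run the iteration. If $v_k^1\to 0$ strongly we stop with $m=0$. Otherwise, a vanishing-versus-nonvanishing dichotomy (Lions' lemma applied to $|v_k^1|$, using that the nonlocal term cannot vanish if $v_k^1\not\to 0$ — indeed if $\sup_y\int_{B_1(y)}|v_k^1|^2\to 0$ then $\int(I_\alpha\ast (v_k^1)^2)(v_k^1)^2\to 0$, forcing $\|v_k^1\|_V\to 0$) produces points $y_k^1$ with $|y_k^1|\to\infty$ (the divergence to infinity because $v_k^1\rightharpoonup 0$) such that $v_k^1(\cdot + y_k^1)\rightharpoonup \omega^1 \neq 0$ weakly in $H^1$. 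By translation invariance of $\mathcal{I}_\infty$, $\omega^1$ is a nontrivial solution of \eqref{Choqlimit}. One then sets $v_k^2 := v_k^1 - \omega^1(\cdot - y_k^1)$ and repeats. At each stage the Brezis--Lieb splitting gives $\mathcal{I}_\infty(v_k^{j+1}) = \mathcal{I}_\infty(v_k^j) - \mathcal{I}_\infty(\omega^j) + o(1)$ and $\mathcal{I}_\infty'(v_k^{j+1})\to 0$, together with the mutual divergence $|y_k^i - y_k^j|\to\infty$ for $i\neq j$ (otherwise translating back would contradict $v_k^j(\cdot+y_k^j)\rightharpoonup\omega^j\neq 0$ while $v_k^j$ is built to have lost the profile at $y_k^i$).

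The process terminates after finitely many steps: every nontrivial solution $\omega$ of \eqref{Choqlimit} satisfies $\mathcal{I}_\infty(\omega)\ge c_\infty > 0$ (a uniform lower bound obtained by evaluating on the Nehari manifold, using the homogeneity of the nonlinearity), while the energies $\mathcal{I}_\infty(v_k^j)$ are nonnegative in the limit and strictly decrease by at least $c_\infty - o(1)$ at each step; hence $m \le d/c_\infty$. When the iteration stops, $v_k^{m+1}\to 0$ strongly, which unwinds to conclusions (i), (ii) ($u_k - \sum_{i=1}^m \omega^i(\cdot - y_k^i)\to u_0$, and I would state it in $H^1$, noting $H^{-1}$ is weaker) and (iii) ($d = \mathcal{I}_V(u_0) + \sum_{i=1}^m \mathcal{I}_\infty(\omega^i)$).

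The main technical obstacle is establishing the Brezis--Lieb-type decomposition for the \emph{nonlocal} quartic term and its Gateaux derivative — i.e. showing that after subtracting a (possibly translated) weak-limit profile, the cross terms in $\int(I_\alpha\ast(\cdot)^2)(\cdot)^2$ vanish in the limit. This requires the Hardy--Littlewood--Sobolev inequality together with the nonlocal Brezis--Lieb lemma (as in \cite{MorozVanSchaftJFA}) applied to translated sequences, and careful bookkeeping of the mixed convolution integrals; everything else (boundedness, the Lions dichotomy, the termination count) is standard and parallels the local case treated in \cite{clasal, MaiaPellacci, ClappMaia}, as the authors indicate. I would therefore present the nonlocal splitting identity as the one lemma to prove in detail and cite the remaining steps.
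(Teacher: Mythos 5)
Your proposal is correct and follows essentially the same route as the paper: extract the weak limit $u_0$, split the energy and its derivative via the nonlocal Brezis--Lieb lemma of Moroz--Van Schaftingen together with Ackermann's lemma, use Lions' lemma to extract diverging translation profiles solving \eqref{Choqlimit}, and terminate the iteration because each nontrivial profile carries at least $c_\infty>0$ of energy. The only difference is presentational (you would prove the nonlocal splitting identity in detail, whereas the paper cites it), and your side remark about stating (ii) in $H^1$ rather than $H^{-1}$ is consistent with what the argument actually delivers.
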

\begin{proof}
Since $(u_k)$ is bounded, there exists $u_0$ such that up to a subsequence $u_k \rightharpoonup u_0$. Then  $\mathcal{I}'_V(u_0)=0$ in $H^{-1}(\R^{N})$. 
Let $u_k^1=u_k-u_0$. By standard arguments and exploiting \eqref{V1}, one has
\[ \int_{\R^N} \abs{\nabla u_k^1}^2 + V_\infty \int_{\R^N} \abs{u_k^1}^2  =\norm{u_k}_V^2 - \norm{u_0}_V^2 + o_k(1). \]
Moreover, by a Brezis-Lieb type lemma (see Lemma 2.4 in \cite{MorozVanSchaftJFA}), one has
\[
\begin{split}
\mathcal{I}_\infty(u_k^1)-\mathcal{I}_V(u_k)+\mathcal{I}_V(u_0) 
=& \int_{\R^N} (I_{\alpha} \ast u_k^{2}) u_k^{2}
- \int_{\R^N} (I_{\alpha} \ast (u_k-u_0)^2) (u_k-u_0)^{2}
\\
& - \int_{\R^N} (I_{\alpha} \ast u_0^2) u_0^{2} + o_k(1) 
\\
=&o_k(1),
\end{split}
\]
where $ o_k(1)\to 0 $ as $k\to+\infty$.
Moreover, as $(u_{k}) $ is a Palais-Smale of ${\mathcal I}_{V}$ at 
level $d$, one has
\[ \mathcal{I}_\infty(u_k^1) \to d - \mathcal{I}_V(u_0). \]
Now, exploiting \cite[Lemma 3.4]{Ackermann}, 
and recalling that ${\mathcal I}_{V}(u_{0})=0$ in $H^{-1}(\R^{N})$, it results, 
for any $\varphi \in H^1$
\begin{multline*}
 o_k(1) \norm{\varphi} \ge \abs{ \langle \mathcal{I}_V'(u_k), \varphi \rangle} \ge \abs{ \langle \mathcal{I}_V'(u_0), \varphi \rangle + \langle \mathcal{I}_\infty'(u_k^1), \varphi\rangle} \\
- \abs{ \int_{\R^N} \left( (I_{\alpha} \ast \abs{u_k^1-u_0}^2) \abs{u_k^1-u_0} - (I_{\alpha} \ast \abs{u_k^1}^2) \abs{u_k^1} - (I_{\alpha} \ast \abs{u_0}^2) \abs{u_0} \right) \varphi} \\
= \abs{ \langle \mathcal{I}_\infty'(u_k^1), \varphi \rangle} - \abs{ \int_{\R^N} \left( (I_{\alpha} \ast \abs{u_k^1-u_0}^2) \abs{u_k^1-u_0} - (I_{\alpha} \ast \abs{u_k^1}^2) \abs{u_k^1} - (I_{\alpha} \ast \abs{u_0}^2) \abs{u_0} \right) \varphi} \\
\ge \abs{ \langle \mathcal{I}_\infty'(u_k^1), \varphi \rangle} -  o_k(1)\norm{\varphi},
\end{multline*} 
Hence
\[ \mathcal{I}_\infty'(u_k^1) \to 0 \text{ in } H^{-1}(\R^{N}). \]
If $u_k^1 \to 0$ strongly, then we are done, choosing $m=0$. 
Otherwise,  by applying Lions lemma \cite{Lions}, there exists $\delta>0$ and a sequence $y_k^1$ such that 
\[ \int_{B_1(y_k^1)} (u_k^1)^2 > \delta. 
\]
 We define $v_k^1(x)=u_k^1(x+y_k^1)$. By boundedness of $(u_k^1)$, there exists $\omega^1$ such that $v_k^1 \rightharpoonup \omega^1$ and $v_k^1 \to \omega^1$ a.e. Therefore, $\omega^1 \ne 0$, and $\abs{y_k^1}\to \infty$. We now show that $\omega^1$ is a solution to \eqref{Choqlimit}. 
Indeed, let $\varphi \in C_c^\infty(\R^{N})$ and set $\varphi_k^1(x)=\varphi(x-y_k^1)$. One has
\[
\mathcal{I}_\infty'(\omega^1)\varphi+o_{k}(1)=\mathcal{I}_\infty'(v_k^1)\varphi=\mathcal{I}_\infty'(u_k^1) \varphi_k^1=o_{k}(1), 
\]
thus $\mathcal{I}_\infty'(\omega^1)=0$ and $\omega^1$ is a solution to \eqref{Choqlimit}. 
Moreover, by \cite[Lemma 2.4]{MorozVanSchaftJFA} and \cite[Lemma 3.4]{Ackermann}, 
\[ \mathcal{I}_\infty(v_k^1)=\mathcal{I}_\infty(v_k^1-\omega^1)+ \mathcal{I}_\infty(\omega^1) + o(1), 
\]
so that
\[ o(1)=\mathcal{I}_\infty' (v_k^1)=\mathcal{I}_\infty'(v_k^1-\omega^1) + o(1). 
\]
Now, we iterate the argument and define  
$u_k^2(x)=u_k^1(x)-\omega^1(x-y_k^1)$. Then, we have
\[\begin{split} 
\mathcal{I}_\infty(u_k^2)&=
\mathcal{I}_\infty(u_k^1)-\mathcal{I}_\infty(\omega^1)+o(1)=\mathcal{I}_V(u_k)-\mathcal{I}_V(u_0)-\mathcal{I}_\infty(\omega^1)+o(1) 
\\
&=d- \mathcal{I}_V(u_0)-\mathcal{I}_\infty(\omega^1)+o(1) 
\end{split}\]
and
\[ \mathcal{I}_\infty'(u_k^2)=\mathcal{I}_\infty'(u_k^1)+o_{k}(1)=o_{k}(1). \]
If $u_k^2 \to 0$ strongly, then we are done, choosing $m=1$. If not, we repeat the argument. After a finite number of steps (as $d$ is finite), we will arrive to a sequence $u_k^{m+1}$ which converges strongly to 0, and the proof is completed. 
\end{proof}

\begin{corollary}\label{PS}
Suppose that  Problem \eqref{Choqlimit} has a unique positive solution. If $c_V$ is not attained, then $c_V \ge c_{\infty}$. Moreover,  ${\mathcal I}_V$ satisfies  the
Palais-Smale condition at every level $d\in (c_{\infty},2c_{\infty})$.
\end{corollary}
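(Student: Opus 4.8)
The plan is to derive both statements from the Splitting Lemma \ref{splitting}, so that the whole argument reduces to producing suitable Palais--Smale sequences and then bookkeeping the energy levels appearing in the decomposition. Two elementary facts will be used repeatedly: every nontrivial solution $w$ of \eqref{Choqlimit} lies on $\mathcal N_\infty$, hence $\mathcal I_\infty(w)\ge c_\infty>0$; and every critical point $u_0$ of $\mathcal I_V$ satisfies $\mathcal I_V(u_0)\ge 0$, with $\mathcal I_V(u_0)=\tfrac14\norm{u_0}_V^2\ge c_V$ when $u_0\ne 0$, both being immediate from $\langle\mathcal I_V'(u_0),u_0\rangle=0$. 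I may also assume throughout that $c_V$ is not attained, since otherwise a minimizer on $\mathcal N_V$, replaced by its modulus and made positive via the strong maximum principle, already solves \eqref{Choquardeq}.

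For the first assertion I would start from a minimizing sequence $(u_k)\subset\mathcal N_V$ for $c_V$; applying Ekeland's principle on the natural constraint $\mathcal N_V$ I may assume in addition $\nabla_{\mathcal N}\mathcal I_V(u_k)\to 0$, so that Lemma \ref{freePS} turns $(u_k)$, up to a subsequence, into a bounded free $(PS)_{c_V}$ sequence. Feeding it into Lemma \ref{splitting} gives $u_0$, an integer $m\ge 0$ and nontrivial solutions $\omega^1,\dots,\omega^m$ of \eqref{Choqlimit} with $c_V=\mathcal I_V(u_0)+\sum_{i=1}^m\mathcal I_\infty(\omega^i)$. If $m=0$ then $u_k\to u_0$ strongly; the Nehari identity $\norm{u_k}_V^2=\int_{\R^N}(I_\alpha\ast u_k^2)u_k^2\le C\norm{u_k}_V^4$ forces $\norm{u_k}_V\ge c>0$, so $u_0\ne 0$ belongs to $\mathcal N_V$ and attains $c_V$, contradicting non-attainment. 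Hence $m\ge1$, and since $\mathcal I_V(u_0)\ge0$ and $\mathcal I_\infty(\omega^i)\ge c_\infty$ one gets $c_V=\mathcal I_V(u_0)+\sum_{i=1}^m\mathcal I_\infty(\omega^i)\ge m\,c_\infty\ge c_\infty$.

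For the second assertion let $d\in(c_\infty,2c_\infty)$ and let $(u_k)$ be a $(PS)_d$ sequence. First I would check boundedness from $\tfrac14\norm{u_k}_V^2=\mathcal I_V(u_k)-\tfrac14\langle\mathcal I_V'(u_k),u_k\rangle\le d+1+o(1)\norm{u_k}_V$, and then apply Lemma \ref{splitting}: up to a subsequence, $d=\mathcal I_V(u_0)+\sum_{i=1}^m\mathcal I_\infty(\omega^i)$ with $\mathcal I_V(u_0)\ge0$ and $\mathcal I_\infty(\omega^i)\ge c_\infty$. Since $d<2c_\infty$, the case $m\ge2$ is impossible, and the goal is to exclude $m=1$ as well, i.e. $d=\mathcal I_V(u_0)+\mathcal I_\infty(\omega^1)$ with $\mathcal I_V(u_0)\in\{0\}\cup[c_V,\infty)$; here I would invoke both the lower bound $c_V\ge c_\infty$ just proved and the uniqueness of the positive solution of \eqref{Choqlimit}, which pins the only admissible bubble energy in this range at $c_\infty$, so that $d\in\{c_\infty\}\cup[2c_\infty,\infty)$, contradicting $d\in(c_\infty,2c_\infty)$. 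Thus $m=0$, $u_k\to u_0$ strongly, which is the $(PS)_d$ condition.

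I expect the delicate point to be precisely this last exclusion of a single escaping bubble: controlling $\mathcal I_\infty(\omega^1)$, which forces one to combine the uniqueness hypothesis with the estimate $c_V\ge c_\infty$ and the size restriction $d<2c_\infty$. By contrast, the boundedness of the Palais--Smale sequences and the conversion of minimizing or constrained sequences into free ones are routine once Lemma \ref{freePS} and the Brezis--Lieb type splitting in Lemma \ref{splitting} are in hand.
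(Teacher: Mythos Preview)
Your argument tracks the paper's proof closely: the same use of Ekeland's principle and Lemma~\ref{freePS} to produce a bounded free Palais--Smale sequence at level $c_V$, the same application of the Splitting Lemma~\ref{splitting}, and the same bookkeeping $d\ge m\,c_\infty$ to force $m\le 1$ when $d<2c_\infty$. The first assertion is handled just as in the paper.

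The one substantive gap is in your exclusion of $m=1$. You write that ``the uniqueness of the positive solution of \eqref{Choqlimit} pins the only admissible bubble energy in this range at $c_\infty$'', and then conclude $d\in\{c_\infty\}\cup[2c_\infty,\infty)$. But uniqueness among \emph{positive} solutions says nothing, by itself, about $\mathcal I_\infty(\omega^1)$: the bubble $\omega^1$ produced by Lemma~\ref{splitting} is only known to be a nontrivial solution of \eqref{Choqlimit}, and it could a priori be sign-changing with energy strictly between $c_\infty$ and $2c_\infty$. The paper closes this gap in two explicit steps. First, if $u_0\not\equiv 0$ then $d\ge c_V+\mathcal I_\infty(\omega^1)\ge c_\infty+c_\infty=2c_\infty$, a contradiction; hence $u_0\equiv 0$ and $d=\mathcal I_\infty(\omega^1)\in(c_\infty,2c_\infty)$. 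Second, if $\omega^1$ were sign-changing, then both $(\omega^1)^{+}$ and $(\omega^1)^{-}$ would lie on $\mathcal N_\infty$, giving $\mathcal I_\infty(\omega^1)=\mathcal I_\infty((\omega^1)^{+})+\mathcal I_\infty((\omega^1)^{-})\ge 2c_\infty$, again a contradiction. Only after this sign argument does uniqueness of the positive solution yield $\mathcal I_\infty(\omega^1)=c_\infty$, contradicting $d>c_\infty$. You correctly identified this as ``the delicate point'' but did not actually supply the argument that forces $\omega^1$ to have constant sign; without it, the uniqueness hypothesis alone is insufficient to conclude.
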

\begin{proof}
First of all let us observe that if $d=c_{V}<c_{\infty}$ then, 
conclusion (iii) of Lemma \ref{splitting} and \eqref{eq:nehariinf} imply
\[
d={\mathcal I}_{V}(u_{0})+\sum_{i=1}^{m}{\mathcal I}_{\infty}(\omega^{i})>
{\mathcal I}_{V}(u_{0})+md\geq md
\]
which immediately gives $m=0$, namely $(u_{k})$ strongly converges to
$u_{0}$ so that $c_{V}$ is attained.
\\
On the  other hand, if $d\in (c_{\infty},2c_{\infty})$, one  again exploits conclusion 
(iii) of Lemma \ref{splitting},  recalling that $u_{0}$ is a solution (so that
${\mathcal I}_{V}(u_{0})=0$ if $u_{0}=0$, otherwise ${\mathcal I}_{V}(u_{0})\geq c_{V}$),  and  takes into
account \eqref{defnehari} to obtain
\[
2c_{\infty}>d={\mathcal I}_{V}(u_{0})+ \sum_{i=1}^{m}{\mathcal I}_{\infty}(\omega^{i})\geq mc_{\infty}
\]
because for every $\omega^{i}$, ${\mathcal I_{\infty}}(\omega^{i})\geq c_{\infty}$.
Then $m\leq 1$ and  it is only left to rule out the case $m=1$.
In this case, if  $u_{0}\not \equiv 0$
\begin{equation}\label{comp}
2c_{\infty}>d={\mathcal I}_{V}(u_{0})+ {\mathcal I}_{\infty}(\omega^{1})\geq c_{V}+
 {\mathcal I}_{\infty}(\omega^{1})\geq  c_{\infty}+ {\mathcal I}_{\infty}(\omega^{1})\geq 2c_{\infty}
\end{equation}
which is an evident contradiction,  so that $u_{0}\equiv 0$. 
Reading again \eqref{comp}, we get $
{\mathcal I}_{\infty}(\omega^{1})\in (c_{\infty},2c_{\infty})$. On the other hand
$(\omega^{1})^{\pm}$ both belong to ${\mathcal N}_{\infty}$, so that 
\[
2c_{\infty}>{\mathcal I}_{\infty}(\omega^{1})={\mathcal I}_{\infty}(\omega^{1})^{+}+{\mathcal I}_{\infty}(\omega^{1})^{-}\geq 2c_{\infty},
\]
as a consequence $\omega^{1}$ does not change sign and it is, up to a translation, the unique positive solution of Problem \eqref{Choqlimit}
and ${\mathcal I}_{\infty}(\omega^{1})=c_{\infty}$.
Finally, writing again (iii) of Lemma \ref{splitting} with
$u_{0}\equiv0$ and $m=1$ gives
\[
d= {\mathcal I}_{\infty}(\omega^{1})= c_{\infty} 
\]
which is again a contradiction, yielding $m=0$.
\end{proof}

Recall \eqref{defeps} and  define for every $\lambda\in [0,1]$
\begin{equation}\label{def:chi} 
\chi_{\lambda,R}= \lambda \omega_{1, R}+(1-\lambda)\omega_{2,R}, \qquad \text{where 
$\omega_{i, R}$ is defined in \eqref{omegaR}}.
\end{equation}

\begin{lemma}\label{radialproj2}
For every $u\in H^{1}(\R^{N})\setminus \{0\}$ the   number
\begin{equation}\label{defT}
\overline{T}:=\left[\frac{\|u\|_{V}^{2}}{\int_{\R^{N}}(I_{\alpha}\ast u^{2})u^{2}}\right]^{1/2}
\end{equation}
is the unique positive one such that $\overline{T} u\in \mathcal{N}_V$ and
the map $T:  H^{1}(\R^{N})\setminus \{0\}\mapsto \R^{+}$ is continuous.
In addition, having defined 
\(
T_{\lambda,R}:=\overline{T}(\chi_{\lambda,R}),
\)
 there exists $R_0>0$ and $T_{0}>0$ such that 
\begin{equation}\label{Tbound}
T_{\lambda,R} \leq T_0, \qquad \text{ for every $R \ge R_0$, and  $\lambda\in [0,1]$.}
\end{equation}
\end{lemma}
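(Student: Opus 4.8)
The statement of Lemma~\ref{radialproj2} is the classical Nehari fibering fact plus a uniform bound along the two-bump family $\chi_{\lambda,R}$. The plan is as follows. First, for a fixed $u\in H^1(\R^N)\setminus\{0\}$ we look at the scalar function $t\mapsto \langle \mathcal{I}'_V(tu),tu\rangle = t^2\|u\|_V^2 - t^4\int_{\R^N}(I_\alpha\ast u^2)u^2$. This is strictly positive for small $t>0$ and strictly negative for large $t$ (the convolution integral is strictly positive by the Hardy--Littlewood--Sobolev inequality since $u\not\equiv 0$), and after dividing by $t^2$ it becomes the affine-in-$t^2$ expression $\|u\|_V^2 - t^2\int_{\R^N}(I_\alpha\ast u^2)u^2$, which vanishes at exactly one value $\overline T>0$, namely the one in \eqref{defT}; this is the unique positive $t$ with $tu\in\mathcal{N}_V$. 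Continuity of $u\mapsto \overline T(u)$ is immediate from formula \eqref{defT}, since $u\mapsto \|u\|_V^2$ is continuous and $u\mapsto \int_{\R^N}(I_\alpha\ast u^2)u^2$ is continuous on $H^1(\R^N)$ (again by \cite[Proposition 3.1]{MorozVanSchaftJFPTA} / the Hardy--Littlewood--Sobolev inequality) and strictly positive away from $0$.

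\textbf{The uniform bound \eqref{Tbound}.} Write $T_{\lambda,R}^2 = \|\chi_{\lambda,R}\|_V^2 \big/ \int_{\R^N}(I_\alpha\ast\chi_{\lambda,R}^2)\chi_{\lambda,R}^2$. For the numerator: since $V$ is bounded above (by \eqref{V1}, say $V\le M$) we have $\|\chi_{\lambda,R}\|_V^2 \le \max\{1,M/V_\infty\}\,\|\chi_{\lambda,R}\|^2$, and expanding the square and using $\|\omega_{i,R}\|=\|\omega\|$ together with Cauchy--Schwarz gives $\|\chi_{\lambda,R}\|^2 \le (\lambda+(1-\lambda))^2\|\omega\|^2 = \|\omega\|^2$ — more crudely, $\|\chi_{\lambda,R}\|_V\le \|\omega_{1,R}\|_V+\|\omega_{2,R}\|_V$, and each $\|\omega_{i,R}\|_V^2 = \int_{\R^N}(|\nabla\omega|^2+V(x+Rz_i)\omega^2)\le \int_{\R^N}(|\nabla\omega|^2+M\omega^2)$, a constant independent of $R$ and $\lambda$. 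So the numerator is bounded by a constant $C_1$ uniformly in $R\ge 0$, $\lambda\in[0,1]$. For the denominator we need a lower bound bounded away from $0$, at least for $\lambda$ away from the endpoints; the key observation is that $\int_{\R^N}(I_\alpha\ast\chi_{\lambda,R}^2)\chi_{\lambda,R}^2 \ge \lambda^4\int_{\R^N}(I_\alpha\ast\omega_{1,R}^2)\omega_{1,R}^2 + (1-\lambda)^4\int_{\R^N}(I_\alpha\ast\omega_{2,R}^2)\omega_{2,R}^2$, because all the mixed terms in the expansion are nonnegative (the kernel $I_\alpha$ is positive), exactly as in the proof of Proposition~\ref{prop:somma}. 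Since $\int_{\R^N}(I_\alpha\ast\omega_{i,R}^2)\omega_{i,R}^2 = \int_{\R^N}(I_\alpha\ast\omega^2)\omega^2 =: D_0 >0$ is a fixed constant (translation invariance), the denominator is at least $(\lambda^4+(1-\lambda)^4)D_0 \ge \tfrac18 D_0$ for all $\lambda\in[0,1]$. Hence $T_{\lambda,R}^2 \le 8C_1/D_0 =: T_0^2$ for every $R\ge 0$ and every $\lambda\in[0,1]$, and in particular for $R\ge R_0$ with any choice of $R_0>0$; this proves \eqref{Tbound}.

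\textbf{Main obstacle.} There is no serious analytic obstacle here — the lemma is standard. The one point that deserves care is making the lower bound on the denominator genuinely uniform in $\lambda\in[0,1]$ including the endpoints $\lambda=0,1$: one must not try to bound below by a mixed term like $\eps_R$ (which $\to 0$), but rather by the ``diagonal'' terms $\lambda^4 D_0+(1-\lambda)^4 D_0$, using only that $\lambda^4+(1-\lambda)^4\ge\tfrac18$ on $[0,1]$. With that elementary inequality in hand, and the positivity of the Riesz kernel to discard the cross terms, both halves of the estimate are immediate, and combining them yields $T_0$ independent of $R$.
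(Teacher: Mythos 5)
Your proof is correct. The uniqueness and continuity part coincides with the paper's (the paper phrases uniqueness via the maximum of $t\mapsto\mathcal{I}_V(tu)$, you via the fibering identity $t^2\|u\|_V^2-t^4\int(I_\alpha\ast u^2)u^2=0$; these are the same computation), and your lower bound on the denominator, $\int(I_\alpha\ast\chi_{\lambda,R}^2)\chi_{\lambda,R}^2\ge\bigl[\lambda^4+(1-\lambda)^4\bigr]\int(I_\alpha\ast\omega^2)\omega^2$ by positivity of the kernel and translation invariance, is exactly the paper's estimate. Where you diverge is the numerator: the paper expands $\|\chi_{\lambda,R}\|_V^2=\bigl[\lambda^2+(1-\lambda)^2\bigr]\|\omega\|^2+o_R(1)$ using Lemma \ref{estimatesV} and the decay of $\omega$, obtaining $T_{\lambda,R}^2\le 4+o_R(1)$ for $R$ large, whereas you only use that $V$ is bounded above (which does follow from \eqref{V1}, since $V$ is continuous with a finite limit at infinity — worth stating, as \eqref{V1} does not assert it explicitly) together with the triangle inequality and $\lambda^4+(1-\lambda)^4\ge\tfrac18$. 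Your route is more elementary, needs none of the asymptotic machinery, and gives a bound uniform in all $R$ (so $R_0$ is irrelevant); the paper's sharper expansion is not needed for \eqref{Tbound} itself, but it is the same computation that is reused with precision in Proposition \ref{estimates2}, which is presumably why the authors carry it out already at this stage. Either way the lemma is fully proved.
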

\begin{proof}
The fact that $\overline{T}$ given in \eqref{defT} is the unique positive number
such that $\overline{T}u\in \mathcal{N}_V$ comes from the characterization
\[
{\mathcal I}_{V}(\overline{T}u)=\max_{t>0}{\mathcal I}_{V}(tu),
\]
and the continuity is a direct consequence of \eqref{defT}.
In order to prove \eqref{Tbound} let us note that Lemma \ref{estimatesV} and 
Theorem \ref{thm:decay} imply
\[
\begin{split}
\|\chi_{\lambda,R}\|_{V}^{2}=&
(\lambda^{2}+(1-\lambda)^{2})\|\nabla \omega\|_{2}^{2}+
2\lambda(1-\lambda)\int_{\R^{N}}\nabla \omega_{1,R}\nabla \omega_{2,R}
\\
&+\lambda^{2}\int_{\R^{N}}V(x+Rz_{1})\omega^{2}+
(1-\lambda)^{2}\int_{\R^{N}}V(x+Rz_{2})\omega^{2}+2\lambda(1-\lambda)\int_{\R^{N}}
V(x)\omega_{1,R}\omega_{2,R}
\\
&=\left[\lambda^{2}+(1-\lambda)^{2}\right]\left[
\|\nabla \omega\|_{2}^{2}+\int_{\R^{N}}V_{\infty}\omega^{2}\right]+o_{R}(1),
\end{split}
\]
where $o_{R}(1)$ is a quantity tending to zero as $R\to +\infty$. In addition,
one has
\[
\begin{split}
\int_{\R^{N}}(I_{\alpha}\ast \chi^{2}_{\lambda,R})\chi^{2}_{\lambda,R}\geq  &
\left[\lambda^{4}+(1-\lambda)^{4}\right]\int_{\R^{N}}(I_{\alpha}\ast \omega^{2})\omega^{2}
\end{split}
\]
so that
\[
\begin{split}
T^{2}_{\lambda,R}=\frac{\lambda^{2}+(1-\lambda)^{2}}
{\lambda^{4}+(1-\lambda)^{4}}
\frac{\|\nabla \omega\|_{2}^{2}+\int_{\R^{N}}V_{\infty}\omega^{2}+o_{R}(1)}{ 
\int_{\R^{N}}(I_{\alpha}\ast \omega^{2})\omega^{2}}
\leq 4+o_{R}(1).
\end{split}
\]
\end{proof}
\begin{proposition}\label{estimates2}
Assume \eqref{V1}, \eqref{Vdecaygen}, \eqref{ipo:sigma}. 
Then, there exists $R_1>0$  and for each $R > R_1$ there exists  $\eta_{R}>0$    such that 
\begin{equation}\label{eq:livellopalla}
\mathcal{I}_V(T_{\lambda,R } \chi_{\lambda,R}) \le 2 c_{\infty} - \eta_{R} 
\end{equation}
for all $\lambda \in [0, 1]$ and all $z \in \partial B_2(z_1)$. 
Moreover, for any $\delta>0$ there exists $R_{2}>0$ such that 
\begin{equation}\label{eq:livellobordo}
\mathcal{I}_V(T_{0,R } \chi_{0, R})=
\mathcal{I}_V(T_{0,R } \omega_{2, R}) < c_{\infty} + \delta 
\end{equation}
for every $z \in \partial B_2(z_0)$ and $R > R_2$. 
 In particular, $c_V \le c_{\infty}$. 
\end{proposition}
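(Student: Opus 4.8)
The plan is to prove \eqref{eq:livellopalla} and \eqref{eq:livellobordo} by evaluating $\mathcal{I}_V$ on $\mathcal{N}_V$ through the explicit projection \eqref{defT}. Since $T_{\lambda,R}\chi_{\lambda,R}\in\mathcal{N}_V$ and $\mathcal{I}_V(\overline{T}u)=\max_{t>0}\mathcal{I}_V(tu)$, from \eqref{defT} one obtains the identity
\[
\mathcal{I}_V(T_{\lambda,R}\chi_{\lambda,R})=\frac14\,\frac{\norm{\chi_{\lambda,R}}_V^4}{\int_{\R^N}(I_\alpha\ast\chi_{\lambda,R}^2)\chi_{\lambda,R}^2}\,,
\]
so it suffices to bound the numerator from above and the denominator from below, with errors $o(\varepsilon_R)$. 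Recall that $\omega$ achieves $c_\infty$, hence $\norm{\omega}^2=\int_{\R^N}(I_\alpha\ast\omega^2)\omega^2=4c_\infty$, and by translation invariance $\int_{\R^N}(I_\alpha\ast\omega_{i,R}^2)\omega_{i,R}^2=4c_\infty$; moreover, by \eqref{cambiovar}, $\varepsilon_R$ depends on $R$ only. For the numerator I would expand $\norm{\chi_{\lambda,R}}_V^2=\lambda^2\norm{\omega_{1,R}}_V^2+(1-\lambda)^2\norm{\omega_{2,R}}_V^2+2\lambda(1-\lambda)(\omega_{1,R},\omega_{2,R})_V$. Lemma \ref{estimatesV} gives $\norm{\omega_{i,R}}_V^2=4c_\infty+\int_{\R^N}(V-V_\infty)\omega_{i,R}^2=4c_\infty+o(\varepsilon_R)$, while testing $-\Delta\omega_{2,R}+V_\infty\omega_{2,R}=(I_\alpha\ast\omega_{2,R}^2)\omega_{2,R}$ against $\omega_{1,R}$ and recalling \eqref{defeps} yields $\int_{\R^N}\nabla\omega_{1,R}\cdot\nabla\omega_{2,R}+V_\infty\int_{\R^N}\omega_{1,R}\omega_{2,R}=\varepsilon_R$, hence
\[
(\omega_{1,R},\omega_{2,R})_V=\varepsilon_R+\int_{\R^N}(V-V_\infty)\omega_{1,R}\omega_{2,R}\le\varepsilon_R+\tfrac12\int_{\R^N}(V-V_\infty)^+(\omega_{1,R}^2+\omega_{2,R}^2)=\varepsilon_R+o(\varepsilon_R),
\]
the last equality by \eqref{Vdecaygen} and the argument of Lemma \ref{estimatesV} on $\{\abs{x}\ge1\}$, plus the fact that on $\{\abs{x}\le1\}$ one has $\omega_{i,R}^2\le Ce^{-2\sqrt{V_\infty}R}R^{-(N-1)+\nu\sqrt{V_\infty}}=o(\varepsilon_R)$ (with $\nu=0$ if $\alpha<N-1$). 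Thus $\norm{\chi_{\lambda,R}}_V^2\le 4c_\infty\bigl(\lambda^2+(1-\lambda)^2\bigr)+2\lambda(1-\lambda)\varepsilon_R+o(\varepsilon_R)$, uniformly in $\lambda\in[0,1]$ and $z_2\in\partial B_2(z_1)$.

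For the denominator I would apply Proposition \ref{prop:somma} with $s=\lambda$, $t=1-\lambda$, obtaining $\int_{\R^N}(I_\alpha\ast\chi_{\lambda,R}^2)\chi_{\lambda,R}^2\ge 4c_\infty\bigl(\lambda^4+(1-\lambda)^4\bigr)+4\lambda(1-\lambda)\bigl(\lambda^2+(1-\lambda)^2\bigr)\varepsilon_R$. Putting $\mu:=\lambda(1-\lambda)\in[0,\tfrac14]$, so $\lambda^2+(1-\lambda)^2=1-2\mu$ and $\lambda^4+(1-\lambda)^4=1-4\mu+2\mu^2>0$, and inserting the two bounds into the identity above, a Taylor expansion in $\varepsilon_R$ of the resulting ratio gives
\[
\mathcal{I}_V(T_{\lambda,R}\chi_{\lambda,R})\le c_\infty\,\frac{(1-2\mu)^2}{1-4\mu+2\mu^2}-\frac{2\mu^3(1-2\mu)}{(1-4\mu+2\mu^2)^2}\,\varepsilon_R+o(\varepsilon_R).
\]
Now $g(\mu):=(1-2\mu)^2/(1-4\mu+2\mu^2)$ satisfies $2-g(\mu)=(1-4\mu)/(1-4\mu+2\mu^2)\ge1-4\mu\ge0$ on $[0,\tfrac14]$, with equality only at $\mu=\tfrac14$, while $\kappa(\mu):=2\mu^3(1-2\mu)/(1-4\mu+2\mu^2)^2$ is continuous and $\ge\kappa_0>0$ on a left neighbourhood of $\tfrac14$. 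Fixing a small $\rho>0$ and distinguishing $\mu\le\tfrac14-\rho$ (where $c_\infty g(\mu)\le 2c_\infty-4\rho c_\infty$ and $\kappa(\mu)\ge0$) from $\mu\in[\tfrac14-\rho,\tfrac14]$ (where $g(\mu)\le2$ and $\kappa(\mu)\ge\kappa_0$), and absorbing $o(\varepsilon_R)$ for $R$ large, one obtains \eqref{eq:livellopalla} with $\eta_R:=\min\{2\rho c_\infty,\tfrac12\kappa_0\varepsilon_R\}>0$, uniformly in $\lambda$ and $z_2$.

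For \eqref{eq:livellobordo}, when $\lambda=0$ we have $\chi_{0,R}=\omega_{2,R}$, so $\int_{\R^N}(I_\alpha\ast\omega_{2,R}^2)\omega_{2,R}^2=4c_\infty$ exactly and $\norm{\omega_{2,R}}_V^2=4c_\infty+\int_{\R^N}(V-V_\infty)\omega_{2,R}^2=4c_\infty+o(1)$ by Lemma \ref{estimatesV} (or dominated convergence); hence $\mathcal{I}_V(T_{0,R}\omega_{2,R})=\tfrac14\norm{\omega_{2,R}}_V^4/(4c_\infty)=c_\infty+o(1)$, which gives \eqref{eq:livellobordo} for $R$ large. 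Since $T_{0,R}\omega_{2,R}\in\mathcal{N}_V$, this forces $c_V\le\mathcal{I}_V(T_{0,R}\omega_{2,R})<c_\infty+\delta$ for all $\delta>0$, so $c_V\le c_\infty$.

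The main obstacle is \eqref{eq:livellopalla} near $\lambda=\tfrac12$ (i.e. $\mu=\tfrac14$): there the leading term $c_\infty g(\mu)$ equals exactly $2c_\infty$, so the strictly negative next-order term, of size $\varepsilon_R$, must be captured. This is why one must use the quantitative form of Proposition \ref{prop:somma} (with the explicit $+4st(s^2+t^2)\varepsilon_R$), the sharp asymptotics of $\varepsilon_R$ from Lemma \ref{le:estepsnew}, and the fact --- guaranteed by Lemma \ref{estimatesV} under the decay hypothesis \eqref{ipo:sigma} --- that all contributions of $V-V_\infty$ are genuinely $o(\varepsilon_R)$. The minor nuisance that $V-V_\infty$ is neither radial nor signed is sidestepped in the numerator estimate by the AM--GM reduction of the cross term to self-interaction integrals already controlled in Lemma \ref{estimatesV}.
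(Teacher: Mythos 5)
Your proposal is correct and follows essentially the same route as the paper: the identity $\mathcal{I}_V(T_{\lambda,R}\chi_{\lambda,R})=\tfrac14\norm{\chi_{\lambda,R}}_V^4/\int(I_\alpha\ast\chi_{\lambda,R}^2)\chi_{\lambda,R}^2$, expansion of the numerator via the limit equation and Lemma \ref{estimatesV} (cross term $=\varepsilon_R+o(\varepsilon_R)$), the lower bound of Proposition \ref{prop:somma} for the denominator, a first-order expansion in $\varepsilon_R$, and the dichotomy between $\lambda$ near $1/2$ (where the $-\varepsilon_R$ term gives the gap) and $\lambda$ away from $1/2$ (where the leading coefficient stays below $2$). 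Your substitution $\mu=\lambda(1-\lambda)$ and the explicit AM--GM treatment of $\int(V-V_\infty)\omega_{1,R}\omega_{2,R}$ are only cosmetic refinements of the paper's argument.
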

\begin{proof}
Let us first  note that \eqref{defT} yields
\begin{equation}\label{eq:iv}
\mathcal{I}_V(T_{\lambda,R } \chi_{\lambda,R})=\frac14 T_{\lambda,R }^{2}\|\chi_{\lambda,R}\|_{V}^{2}.
\end{equation}
Repeating the argument in  the proof of Lemma \ref{radialproj2}, taking into 
account Lemma \ref{estimatesV}, \eqref{defeps} and the fact that $\omega$ is 
a solution of \eqref{Choqlimit} we get
\begin{equation}\label{eq:ablambda}
\|\chi_{\lambda,R}\|_{V}^{2}=
(s^{2}+t^{2})\|\omega\|^{2}+2st\eps_{R}+o(\eps_{R}),
\quad \text{where $s=\lambda$,  $t=(1-\lambda)$, }
\end{equation}
On the other hand, Proposition \ref{prop:somma} and Lemma \ref{estimatesV}  yield
\[\begin{split}
\int_{\R^{N}}(I_{\alpha}\ast\chi^{2}_{\lambda,R})\chi^{2}_{\lambda,R}
&\geq 
(s^{4}+t^{4})
\int_{\R^{N}}(I_{\alpha}\ast\omega^{2})\omega^{2} +4st(s^{2}+t^{2}) \eps_{R}
\\
&=(s^{4}+t^{4})
\|\omega\|^{2} +4st(s^{2}+t^{2}) \eps_{R}.
\end{split}\]
Using these information in  \eqref{defT} 
and taking into account the expansion 
$(a+bt)^{-1}=\frac1a-\frac{b}{a^{2}}t+o(t)$ one gets
\[\begin{split}
T^{2}&
\leq \left\{(s^{2}+t^{2})\|\omega\|^{2}+2st\eps_{R}+o(\eps_{R})\right\}
\left\{\frac1{(s^{4}+t^{4})\|\omega\|^{2}}-\frac{4st(s^{2}+t^{2}) }{(s^{4}+t^{4})^{2}\|\omega\|_{V}^{4}}\eps_{R}+o(\eps_{R})
\right\}
\\
&=\frac{s^{2}+t^{2}}{(s^{4}+t^{4})}+\frac{2st}{(s^{4}+t^{4})\|\omega\|^{2}}\eps_{R}\left\{1-2\frac{(s^{2}+t^{2})^{2}}{s^{4}+t^{4}}
\right\}+o(\eps_{R}).
\end{split}
\]
When using this inequality in \eqref{eq:iv} one obtains, thanks to \eqref{eq:ablambda},
\begin{equation}\label{final:est2}
\begin{split}
\mathcal{I}_V(T_{\lambda,R } \chi_{\lambda,R})
&\leq
\frac14\|\omega\|_{V}^{2}\frac{(s^{2}+t^{2})^{2}}{s^{4}+t^{4}} + 
\frac{st(s^{2}+t^{2})}{(s^{4}+t^{4})}\eps_{R}\left\{1-\frac{(s^{2}+t^{2})^{2}}{(s^{4}+t^{4})}
\right\}
\\
&=c_{\infty}\frac{(s^{2}+t^{2})^{2}}{s^{4}+t^{4}}-\frac{2s^{3}t^{3}(s^{2}+t^{2})}{(s^{4}+t^{4})^{2}}\eps_{R}+o(\eps_{R})
\end{split}\end{equation}
Let now $\delta$ be a positive constant less than $1/2$.
If $\lambda\in [\frac12-\delta,\frac12+\delta]$ one observes that
\[
\frac{2s^{3}t^{3}(s^{2}+t^{2})}{(s^{4}+t^{4})^{2}}\geq \mu_{\delta}>0,\quad 
\frac{(s^{2}+t^{2})^{2}}{s^{4}+t^{4}}\leq 2,
\]
so that \eqref{final:est2} becomes
\[
\mathcal{I}_V(T_{\lambda,R } \chi_{\lambda,R}) \leq
2 c_{\infty}- \mu_{\delta}\eps_{R} +o(\eps_{R})\leq2c_{\infty}-\eta_{R}.
\]
In the case $|\lambda-\frac12|>\delta$, there exists $\sigma_{\delta}$ such that
that 
\[
\frac{(s^{2}+t^{2})^{2}}{s^{4}+t^{4}}\leq 2-\sigma_{\delta}<2
\]
so that
\eqref{final:est2} becomes 
\[
\mathcal{I}_V(T_{\lambda,R } \chi_{\lambda,R}) \leq
(2-\sigma_{\delta})c_{\infty}-\frac{2s^{3}t^{3}(s^{2}+t^{2})}{(s^{4}+t^{4})^{2}}\eps_{R} +o(\eps_{R})<2c_{\infty}.
\]
Then, also in this case we get \eqref{eq:livellopalla}.
In order to show \eqref{eq:livellobordo} we repeat the same argument with $\lambda=0$ arriving at \eqref{final:est2} which now reads as follows
(recalling that $s=0$, $t=1$)
\[
\mathcal{I}_V(\overline{T}(\omega_{2, R}) \omega_{2, R})
 \leq  c_{\infty}+o(\eps_{R}).
\]
which  immediately yields \eqref{eq:livellobordo}.
\end{proof}

From now on we will make use of a barycenter map, whose definition and properties we briefly recall for the sake of completeness.
For more details see \cite{CeramiPassaseo}, \cite{bawe}, \cite{amceru}, \cite{MaiaPellacci}.
For every $u\in H^{1}(\R^{N})\setminus \{0\}$, the 
following maps are well defined
\[
\mu(u)(x):=\frac{1}{|B_{1}(x)|}\int_{B_{1}(x)}|u(y)|dy,\quad
\text{$\mu(u)\in L^{\infty}\cap C^{0}(0,+\infty)$ },
\]
$$
\hat{u}(x):=\left[\mu(u)(x)-\frac{\|\mu(u)\|_{\infty}}2\right]^{+},
\quad \hat{u}\in C_{0}(\R^{N}).
$$
Then, the barycenter  of a function $u\in H^{1}(\R^{N})\setminus \{0\}$  defined by
$$
\beta(u)=\frac{1}{\|\hat{u}\|_{1}}\intr x\hat{u}(x)dx
$$
 is a continuous function enjoying the following properties 
\begin{align}
\label{btras} \beta(u(\cdot-y))&=\beta(u)+y\quad \forall\,y\in \R^{N},
\\
\label{bmolt}\beta(Tu)&=\beta(u)\quad \forall\,T>0.
\end{align}
Note that $\beta(u)=0$ if $u$ is radial. 

\begin{lemma}\label{cv=cinf}
If $c_V$ is not attained then $c_V = c_{\infty}$ and there exists $\delta>0$ such that 
\[ \beta(u) \ne z_1 \quad \forall u \in \mathcal{N}_V \cap \mathcal{I}_V^{c_{\infty}+\delta} \]
where $\mathcal{I}_V^c= \{ u : \mathcal{I}_V(u) \le c \}$. 
\end{lemma}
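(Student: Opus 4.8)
The plan is to argue by contradiction. First I would suppose that $c_V$ is not attained. Corollary~\ref{PS} then gives $c_V\ge c_\infty$, while Proposition~\ref{estimates2} (the last assertion, via \eqref{eq:livellobordo}) gives $c_V\le c_\infty$; hence $c_V=c_\infty$. It remains to produce $\delta>0$ with $\beta(u)\ne z_1$ for every $u\in\mathcal N_V\cap\mathcal I_V^{c_\infty+\delta}$.

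Suppose not. Then for every $n$ there is $u_n\in\mathcal N_V$ with $\mathcal I_V(u_n)\le c_\infty+\tfrac1n$ and $\beta(u_n)=z_1$. Since $\mathcal I_V(u_n)\to c_\infty=c_V$, the sequence $(u_n)$ is a minimizing sequence for $\mathcal I_V$ on $\mathcal N_V$. By Ekeland's variational principle applied on $\mathcal N_V$ we may assume in addition that $\nabla_{\mathcal N}\mathcal I_V(u_n)\to 0$; by Lemma~\ref{freePS} this upgrades to a bounded free Palais--Smale sequence, i.e. a bounded $(PS)_{c_\infty}$ sequence for $\mathcal I_V$ in $H^1(\R^N)$. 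Now I apply the Splitting Lemma (Lemma~\ref{splitting}): up to a subsequence there are $u_0$ solving \eqref{Choquardeq}, an integer $m$, nontrivial solutions $\omega^1,\dots,\omega^m$ of \eqref{Choqlimit}, and sequences $(y_n^j)$ with $|y_n^j|\to\infty$, such that $u_n-\sum_j\omega^j(\cdot-y_n^j)\to u_0$ and $c_\infty=\mathcal I_V(u_0)+\sum_{j=1}^m\mathcal I_\infty(\omega^j)$. Since $c_V$ is not attained, $u_0$ cannot lie in $\mathcal N_V$, so in fact $u_0\equiv 0$ (if $u_0\neq 0$ then $\mathcal I_V(u_0)\ge c_V=c_\infty$ forces all $\mathcal I_\infty(\omega^j)\le 0$, impossible since each $\ge c_\infty>0$, unless $m=0$, i.e. $u_n\to u_0$ strongly in $\mathcal N_V$, contradicting non-attainment). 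With $u_0\equiv 0$ the decomposition reads $c_\infty=\sum_{j=1}^m\mathcal I_\infty(\omega^j)\ge m\,c_\infty$, hence $m=1$ and $\mathcal I_\infty(\omega^1)=c_\infty$; arguing as in the proof of Corollary~\ref{PS} with the Nehari splitting of $(\omega^1)^\pm$, $\omega^1$ does not change sign, so up to translation $\omega^1=\omega$, the unique positive least-action solution, which is radial.

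Thus $u_n=\omega(\cdot-y_n^1)+o(1)$ in $H^1(\R^N)$ with $|y_n^1|\to\infty$. The barycenter $\beta$ is continuous on $H^1(\R^N)\setminus\{0\}$ and translation-covariant by \eqref{btras}; since $\beta(\omega)=0$ ($\omega$ radial), continuity gives $\beta(u_n)=\beta(\omega(\cdot-y_n^1))+o(1)=y_n^1+o(1)$. But $\beta(u_n)=z_1$ for all $n$, so $y_n^1$ stays bounded, contradicting $|y_n^1|\to\infty$. This contradiction proves the existence of the required $\delta$.

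The main obstacle is the passage from a minimizing sequence on $\mathcal N_V$ to a genuine $(PS)_{c_\infty}$ sequence in $H^1$ — i.e. justifying the Ekeland step on the Nehari manifold and invoking Lemma~\ref{freePS} — together with carefully ruling out $u_0\neq 0$ and $m\ge 2$ under the non-attainment hypothesis; once the profile decomposition collapses to a single translated copy of the radial $\omega$, the barycenter argument closes the proof essentially for free. A minor technical point to state cleanly is the continuity of $\beta$ together with $\beta(u(\cdot-y))=\beta(u)+y$, which together force $\beta(u_n)\approx y_n^1\to\infty$.
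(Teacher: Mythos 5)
Your argument is correct and follows essentially the same route as the paper: $c_V=c_\infty$ from Corollary~\ref{PS} together with Proposition~\ref{estimates2}, then a contradiction sequence with barycenter $z_1$, upgraded via Ekeland and Lemma~\ref{freePS} to a bounded Palais--Smale sequence, collapsed by the Splitting Lemma to a single translated copy of $\omega$, and finally the translation covariance \eqref{btras} of $\beta$ to contradict $|y_n^1|\to\infty$. The only cosmetic point is the Ekeland step: rather than ``assuming'' $\nabla_{\mathcal N}\mathcal I_V(u_n)\to 0$, one passes to a nearby sequence $(v_n)$ with $\|v_n-u_n\|_V\to 0$, so that $\beta(v_n)\to z_1$, exactly as the paper does, and your argument goes through unchanged.
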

\begin{proof}
By Corollary \ref{PS} and Proposition \ref{estimates2} if $c_V$ is not attained, then $c_V=c_{\infty}$. 
Let us assume by contradiction that for each $k \in \mathbb{N}$ there exists $u_k \in \mathcal{N}_V$ such that 
\[
\mathcal{I}_V(u_k) < c_V + \frac{1}{k},\quad  \text{and} \quad \beta(u_k)=z_1.
\]
 By Ekeland's variational principle (see \cite{Willem}) there exists a constrained Palais-Smale sequence,  called  $(v_k)$, at level $c_V$ for $\mathcal{I}_V$ on $\mathcal{N}_V$ and such that $\|v_{k}-u_{k}\|_{V}\to 0$, so that $\beta(v_{k})\to z_{1}$.
  By Lemma \ref{freePS}  $(v_k)$ is (up to a subsequence) a  bounded Palais Smale sequence for $\mathcal{I}_V$ at level $c_V$  in $H^{1}(\R^{N})$. 
Since $c_V$ is not attained, we conclude by Lemma \ref{splitting} that there exists 
a sequence $(z_k)$ such that $\abs{z_k} \to \infty$ such that 
$\norm{v_k - \omega(\cdot - z_k)} \to 0$. We set $ w_k(x)=v_k(x+z_k)$ and 
from \eqref{btras}, it follows that 
\[ 
z_1 - z_k =\beta(v_k)-z_k+o_{k}(1)=\beta (w_k)+o_{k}(1) \to \beta(\omega)=0, 
\]
which is a contradiction. 
\end{proof}
We are finally in the position to prove our main results.
\begin{proof}[Proof of Theorem \ref{thm:abstract}]
If $c_V$ is attained at some $u \in \mathcal{N}_V$, taking into account that 
$\mathcal{N}_V$ is a natural constraint for $\mathcal{I}_{V}$ it turns out that
$u$ is a nontrivial solution of \eqref{Choquardeq}. Let us assume that $c_V$ is not attained. Then by Lemma \ref{cv=cinf} $c_V=c_{\infty}$. Then, we are going to show that $\mathcal{I}_V$ has a critical value in $(c_{\infty}, 2c_{\infty})$. 
By Lemma \ref{cv=cinf} we may find a $\delta >0$ sufficiently small such that 
\[ 
\beta(u) \ne z_1 \quad \forall u \in \mathcal{N}_V \cap \mathcal{I}_V^{c_{\infty}+\delta}, \]
where $\mathcal{I}_V^{b}:=\{u\in H^{1}(\R^{N}) : \mathcal{I}_V (u)\leq b\}$.

Moreover, thanks to  Proposition \ref{estimates2}  we can choose $\eta > 0$ 
sufficiently small and $R>0$ such that 
\[
\mathcal{I}_V(T_{\lambda,R} \chi_{\lambda,R}) \le \begin{cases}
2c_{\infty} - \eta & \text{ for all } \lambda \in [0, 1] \text{ and all } z_{2} \in \partial B_2(z_1) \\
c_{\infty}+\delta & \text{ for } \lambda=0  \text{ and all } z_{2} \in \partial B_2(z_1).
\end{cases}
\]
Let us define $\psi : \overline{B_2(z_1)} \to \mathcal{N}_V \cap \mathcal{I}_V^{2c_{\infty} -\eta}$ by
\[ \psi(\lambda z_1 + (1-\lambda)z_{2}) = T_{\lambda,R} \chi_{\lambda,R}, \text{ with } \lambda \in [0, 1], z_{2} \in \partial B_2(z_1). \]
Let us assume by contradiction that $\mathcal{I}_V$ does not have a critical value in $(c_{\infty}, 2c_{\infty})$. 
Thus,  one can define a continuous deformation (see Lemma 5.15 in \cite{Willem})
\[ 
\rho: \mathcal{N}_V \cap \mathcal{I}_V^{2c_{\infty} -\eta} \mapsto \mathcal{N}_V \cap \mathcal{I}_V^{c_{\infty} +\delta} 
\]
such that $\rho(u)=u$ for all $u \in \mathcal{N}_V \cap \mathcal{I}_V^{c_{\infty} +\delta}$. 
Then the function $h: \overline{B_2(z_1)} \to \partial B_2(z_1)$ given by 
\[ 
h(x)=2 \left( \frac{(\beta \circ \rho \circ \psi)(x) - z_1}{\abs{(
\beta \circ \rho \circ \psi)(x) - z_1}} \right) + z_1 
\]
is well defined and continuous. Moreover, if $z_{2} \in \partial B_2(z_1)$, 
then 
\[
\psi(z_{2})=T_{0,R} \chi_{0,R} =T_{0,R} \omega_{2,R} \in \mathcal{N}_V \cap \mathcal{I}_V^{c_{\infty} +\delta},
\] 
and $(\beta \circ \rho \circ \psi)(z_{2})=\beta (T_{0,R}\omega_{2,R})=z_{2}$. Therefore, $h(z_{2})=z_{2}$ for every $z_{2} \in \partial B_2(z_1)$. Since such a map does not exists, $\mathcal{I}_V$ must have a critical point $u$.  
Noting that $u^{\pm}$ both belong to $\mathcal{N}_V $ so that
$\mathcal{I}_V(u^{\pm})\geq \mathcal{I}_V(u)$ and at the same time
\[
\mathcal{I}_V(u^{+})+\mathcal{I}_V(u^{-}) =\mathcal{I}_V(u)= c_{V}\in(c_{\infty},2c_{\infty})
\]
allows us to conclude that $u$ can be chosen nonnegative and by the Maximum Principle $u$ is positive.
\end{proof}
\begin{proof}[Proof  of Theorem \ref{thm:uniq}]
Theorem \ref{thm:uniq} immediately follows once one  notices that hypothesis
\eqref{ipo:sigma} reduces to  \eqref{iposigma} when $\alpha=2$ and $N=3,4,5$.
\end{proof}
	
%%%%%%%%%%%%%%%%%%%%%%%%%%%%%%%%%%%%%%%%%%%%%%%%%%%%%%%%%%%%%%


\begin{thebibliography}{9}
\bibitem{Ackermann} N. Ackermann, \emph{On a periodic Schr\"odinger 
equation with nonlocal superlinear part}, \textit{Math. Z.}, \textbf{248} 
(2004), 413--443.
\bibitem{Alves} C.O. Alves, A.B. N\'obrega, M. Yang
\emph{Multi-bump solutions for Choquard equation with deepening potential well}
\textit{ Calc. Var. Partial Differential Equations}  \textbf{55}, (2016) 55--48.

\bibitem{AmbrosettiColoradoRuiz} A. Ambrosetti, E. Colorado, D. Ruiz, 
\emph{Multi-bump solitons to linearly coupled systems of nonlinear Schr\"odinger equations}, \textit{Calc. Var.} \textbf{30} (2007), 85--112. 

 \bibitem{amceru}  A. Ambrosetti, G. Cerami and D. Ruiz, 
\emph{Solitons of linearly coupled systems of semilinear 
non-autonomous equations on $R^{n}$},
J. Funct. Anal. \textbf{254}, no.11, (2008), 2816--2845.

\bibitem{bawe} T. Bartsch, T. Weth, 
\emph{Three nodal solutions of singularly elliptic equations on
domains without topology},
Ann. I. H. Poincar\'e Anal. Non Lin\'eaire \textbf{22},
no. 3, (2005),  259--281.

\bibitem{BenciCerami} V. Benci, G. Cerami, \emph{Positive solutions of some 
nonlinear elliptic problems in exterior domains}, Arch. Rational
Mech. Anal. \textbf{99}, no. 4, (1987), 283--300.

\bibitem{CeramiPassaseo} G. Cerami, D. Passaseo, Existence and multiplicity results for semilinear elliptic Dirichlet problems in exterior domains, \textit{Nonlinear Anal.} \textbf{24} (1995), 1533--1547.

\bibitem{CingolaniClappSecchi} S. Cingolani, M. Clapp, S. Secchi, Multiple solutions to a magnetic nonlinear Choquard equation \textit{Z. Angew. Math. Phys.} \textbf{63} (2012), 233--248.

\bibitem{ClappMaia} M. Clapp, L. Maia, A positive bound state for an asymptotically linear or superlinear
Schr\"odinger equation, \textit{J. Differential Equations} \textbf{260} (2016), 3173--3192.


\bibitem{clasal} M. Clapp, D. Salazar, Positive and sign changing solutions to a 
nonlinear Choquard equation, \textit{J. Math. Anal. Appl.} \textbf{407} (2013), 
1--15. 

\bibitem{clasal1}  M. Clapp, D. Salazar, Multiple Sign Changing Solutions of 
Nonlinear Elliptic Problems in Exterior Domains, \textit{Advanced Nonlinear 
Studies}, \textbf{12}, (2012), 427--443.

\bibitem{GhimentiMorozVanSchaft} M. Ghimenti, V. Moroz, J. Van Schaftingen, Least action nodal solutions for the quadratic Choquard equation, \textit{Proc. Amer. Math. Soc.} \textbf{145} (2017), 737--747. 

\bibitem{GhimentiVanSchaft} M. Ghimenti, J. Van Schaftingen, 
Nodal solutions for the Choquard equation, \textit{ J. Funct. Anal.} \textbf{271} (2016), 107--135. 

\bibitem{Lieb} E.H. Lieb, Existence and uniqueness of the minimizing solutions of Choquard's nonlinear equation, \textit{Studies in Appl. Math.} \textbf{57} (1976/77), 93--105.

\bibitem{Lions} P.L. Lions, The Choquard equation and related questions   \textit{Nonlinear Analysis} \textbf{4} (1980), 1063--1073.
 
\bibitem{Lions2} P.L. Lions, The concentration-compactness principle in the 
calculus of variations. The locally compact case. I. \textit{Ann. Inst. H. Poincaré 
Anal. Non Linéaire} \textbf{1} (1984), 109--145. 

\bibitem{MaZhao} L. Ma, L. Zhao, Classification of positive solitary solutions of the 
nonlinear Choquard equation, \textit{Arch. Ration. Mech. Anal.} \textbf{195} 
(2010), 455--467. 

\bibitem{MaiaPellacci} L. Maia, B. Pellacci, Positive solutions for asymptotically 
linear problems in exterior domains, \textit{Ann. Mat. Pura Appl. (4)} \textbf{196} 
(2017), 1399--1430.

\bibitem{MaPeSc} L. Maia, B. Pellacci, D. Schiera, {\em Symmetric Positive solutions to nonlinear Choquard Equations}. Preprint.

\bibitem{MorozVanSchaftJFPTA} V. Moroz, J. Van Schaftingen, A guide to the Choquard equation, \textit{Journal of Fixed Point Theory and Applications} \textbf{19} (2017), 773--813.

\bibitem{MorozVanSchaftJDE} V. Moroz, J. Van Schaftingen, Nonexistence and optimal decay of supersolutions to Choquard equations in exterior domains, \textit{J. Differential Equations} \textbf{254} (2013), 3089--3145.

\bibitem{MorozVanSchaftJFA} V. Moroz, J. Van Schaftingen, Groundstates of 
nonlinear Choquard equations: existence, qualitative properties and decay 
asymptotics, \textit{J. Funct. Anal.} \textbf{265} (2013), 153--184.

\bibitem{Struwe} 
M. Struwe, A global compactness result for elliptic boundary value problems involving limiting nonlinearities, \textit{Math Z.} \textbf{187} (1984), 511--517.

\bibitem{VanShaftXia} J. Van Schaftingen, J. Xia, Choquard equations under
confining external potentials, \textit{Nonlinear Differ. Equ. Appl. } 
 (2017), 1--24. DOI 10.1007/s00030-016-0424-8
 
\bibitem{WangQuXiao} J. Wang, M. Qu, L. Xiao, Existence of positive solutions to the nonlinear Choquard equation with competing potentials, \textit{Electr. J. Differ. Equ.} \textbf{63} (2018), 1--21. 
\bibitem{Willem} M. Willem, \textit{Minimax theorems}, Progress in Nonlinear Differential Equations and their Applications, 24, Birkh\"auser Boston, Inc., Boston, MA, 1996, x+162 pp.
\bibitem{Xiang} C. L. Xiang, Uniqueness and nondegeneracy of ground states for Choquard equations in three dimensions, \textit{Calc. Var. Partial Differential Equations} \textbf{55} (2016), Art. 134, 25 pp. 
\bibitem{wangyi}  T. Wang, T. Yi, \emph{Uniqueness of positive solutions of
the Choquard type equations}, \textit{Applicable Analysis}, \textbf{96} (2017),
409--417.



\end{thebibliography}
\end{document}